\newtheorem{theorem}{Theorem}[section]
\newtheorem{lemma}[theorem]{Lemma}
\newtheorem{proposition}[theorem]{Proposition}
\newtheorem{corollary}[theorem]{Corollary}
\newtheorem{conjecture}[theorem]{Conjecture}
\theoremstyle{definition}
\newtheorem{remark}[theorem]{Remark}
\newtheorem{example}[theorem]{Example}
\numberwithin{equation}{section}
\newcommand{\ph}[2]{{\left({#1}\right)}_{#2}}
\newcommand{\bin}[2]{\left({\genfrac{}{}{0pt}{}{#1}{#2}}\right)}
\newcommand{\fac}[1]{{#1}\text{!}}
\newcommand{\assign}{:=}
\renewcommand*{\bar}{\overline}
\def\imod#1{\allowbreak\mkern5mu({\operator@font mod}\,\,#1)}
\begin{document}

\title[Sequences, modular forms and cellular integrals]{Sequences, modular forms and cellular integrals}

\author{Dermot McCarthy, Robert Osburn, Armin Straub}

\address{Department of Mathematics $\&$ Statistics, Texas Tech University, Lubbock, TX 79410-1042, USA}

\address{School of Mathematics and Statistics, University College Dublin, Belfield, Dublin 4, Ireland}

\address{Department of Mathematics and Statistics, University of South Alabama, 411 University Blvd N, Mobile, AL 36688}

\email{dermot.mccarthy@ttu.edu}

\email{robert.osburn@ucd.ie}

\email{straub@southalabama.edu}

\subjclass[2010]{Primary: 11F03; Secondary: 11M32, 14H10}
\keywords{Modular forms, period integrals, supercongruences}

\date{\today}

\begin{abstract}
It is well-known that the Ap{\'e}ry sequences which arise in the irrationality proofs for $\zeta(2)$ and $\zeta(3)$ satisfy many intriguing arithmetic properties and are related to the $p$th Fourier coefficients of modular forms. In this paper, we prove that the connection to modular forms persists for sequences associated to Brown's cellular integrals and state a general conjecture concerning supercongruences.
\end{abstract}

\maketitle

\section{Introduction} \label{sec:intro}

Period integrals on the moduli space $\mathcal{M}_{0,N}$ of curves of genus 0 with $N$ marked points have featured prominently in a variety of mathematical and physical contexts. These period integrals are of particular importance as, for example,  it is now known that they are $\mathbb{Q}$-linear combinations of multiple zeta values  \cite{brown09} thereby proving a conjecture of Goncharov and Manin \cite{gm}, provide an effective computation of a large class of Feynman integrals \cite{bb-feynman}, occur in superstring theory \cite{bss}, \cite{ss} and have potential connections to higher Frobenius limits in relation to the Gamma conjecture \cite{gz}. Recently, Brown \cite{brown-apery} introduced a program where such period integrals play a central role in understanding irrationality proofs of values of the Riemann zeta function. Before discussing this program, we first briefly review the classical situation and some subsequent developments. 

Inspired by Ap{\'e}ry, Beukers \cite{beukers79} gave another proof of the irrationality of $\zeta(2)$ and $\zeta(3)$ by considering the integrals
\begin{equation} \label{zeta2}
  I_n \assign (-1)^n \int_{0}^{1} \int_{0}^{1} \frac{x^n (1-x)^n y^n (1-y)^n}{(1-xy)^{n+1}} \,\, dxdy 
\end{equation}
and
\begin{equation} \label{zeta3}
  J_n \assign \frac12 \int_{0}^{1} \int_{0}^{1} \int_{0}^{1} \frac{x^n (1-x)^n y^n (1-y)^n w^n (1-w)^n}{(1-(1-xy)w)^{n+1}} \,\, dxdydw.
\end{equation}
He showed that
\begin{equation*}
  I_n = a(n) \zeta(2) + \tilde{a}(n) \quad \quad \text{and} \quad \quad J_n = b(n) \zeta(3) + \tilde{b}(n)
\end{equation*}
where $\tilde{a}(n)$ and $\tilde{b}(n)$ are explicit rational numbers and
\begin{equation}\label{AperyNumbers}
  a(n) = \sum_{k=0}^{n} \binom{n}{k}^2 \binom{n+k}{k}, \quad \quad \quad \quad b(n) = \sum_{k=0}^{n} \binom{n}{k}^2 \binom{n+k}{k}^2
\end{equation}
are the {\it Ap{\'e}ry numbers}. Since their introduction, there has been substantial interest in both the intrinsic arithmetic properties of the Ap{\'e}ry numbers and their relationship to modular forms.
For example, consider
\begin{equation*}
  \eta^{6}(4z) =: \sum_{n=1}^{\infty} \alpha(n) q^n,
\end{equation*}
the unique newform in $S_{3}(\Gamma_{0}(16),(\tfrac{-4}{\cdot}))$, where $\eta(z) = q^{1/24} \prod_{n\ge1} (1 - q^n)$ is the Dedekind eta-function, $q=e^{2\pi i z}$ and $z \in \mathbb{H}$.
Ahlgren \cite{ahlgren} showed that, for all primes $p \geq 5$, 
\begin{equation} \label{superapery1}
  a\Bigl(\frac{p-1}{2}\Bigr) \equiv \alpha(p) \pmod{p^2},
\end{equation}
thus confirming a conjecture in \cite{sb}. In \cite{ao}, Ahlgren and Ono proved a conjecture of Beukers \cite{beukers} which stated that, if $p$ is an odd prime, then
\begin{equation} \label{superapery2}
  b\Bigl(\frac{p-1}{2}\Bigr) \equiv \beta(p) \pmod{p^2},
\end{equation}
where 
\begin{equation*}
  \eta^4 (2z) \eta^4 (4z) =: \sum_{n=1}^{\infty} \beta(n) q^n 
\end{equation*}
is the unique newform in $S_{4}(\Gamma_{0}(8))$. Both of these influential results required the existence of an underlying modular Calabi-Yau variety, specifically a $K3$-surface for \eqref{superapery1} and a Calabi-Yau 3-fold for \eqref{superapery2} in order to relate the $p$th Fourier coefficients to finite field hypergeometric series. Concerning arithmetic properties, Coster \cite{coster} proved the supercongruences
\begin{equation} \label{cos1}
  a(mp^r) \equiv a(mp^{r-1}) \pmod{p^{3r}}
\end{equation}
and
\begin{equation} \label{cos2}
  b(mp^r) \equiv b(mp^{r-1}) \pmod{p^{3r}}
\end{equation}
for primes $p \geq 5$ and integers $m$, $r \geq 1$. Other supercongruences of this type have been studied in numerous works (for example, see \cite{at}, \cite{beukers0}, \cite{ccs}, \cite{cooper}, \cite{gessel}, \cite{kc}, \cite{os1}--\cite{oss}, \cite{s}). In order to view \eqref{superapery1}--\eqref{cos2} from a general perspective, we discuss the setup from \cite{brown-apery}.  

Recall that $\mathcal{M}_{0,N}$, $N \geq 4$, is the moduli space of genus zero curves (Riemann spheres) with $N$ ordered marked points $(z_1, \dotsc, z_{N})$. It is the set of $N$-tuples of distinct points $(z_1, \dotsc, z_{N})$ modulo the equivalence relation given by the action of PSL$_{2}$. This action is triply-transitive and so there is a unique representative of each equivalence class such that $z_1=0$, $z_{N-1}=1$ and $z_{N}=\infty$. We introduce simplicial coordinates on $\mathcal{M}_{0,N}$ by setting $t_1=z_2$, $t_2=z_3$, $\dotsc$, $t_{N-3} = z_{N-2}$. This yields the identification
\begin{equation*}
  \mathcal{M}_{0,N} \cong \{ (t_1, \dotsc, t_{N-3}) \in (\mathbb{P}^{1} \setminus \{0, 1, \infty \})^{N-3} \mid t_i \neq t_j \,\, \text{for all} \,\, i \neq j \}.
\end{equation*}
A typical period integral on $\mathcal{M}_{0,N}$ can be given as
\begin{equation} \label{generic}
  \int_{S_N} \prod t_i^{a_i} (1- t_j)^{b_j} (t_i - t_j)^{c_{ij}} dt_1 \dotsc dt_{N-3},
\end{equation}
where $a_i$, $b_j$ and $c_{ij} \in \mathbb{Z}$ are such that \eqref{generic} converges and the simplex
\begin{equation*}
  S_{N} = \{(t_1, \dotsc, t_{N-3}) \in \mathbb{R}^{N-3} : 0 < t_1 < \dotsc < t_{N-3} < 1 \}
\end{equation*}
is a connected component of $\mathcal{M}_{0,N}(\mathbb{R})$. It was proved by Brown \cite{brown09} that the integrals \eqref{generic} are $\mathbb{Q}$-linear combinations of multiple zeta values of weight up to and including $N-3$. That these typically involve multiple zeta values of all weights is an obstruction to irrationality proofs.  For example, generic period integrals on $\mathcal{M}_{0,6}$ yield linear forms in $1$, $\zeta(2)$ and $\zeta(3)$. To ensure the vanishing of  coefficients, we consider a variant of the classical ``dinner table problem" \cite{al}, \cite{poulet}. 

Suppose we have $N \geq 5$ people sitting at a round table. We permute them such that each person has two new neighbors. We represent the new seating arrangement (non-uniquely) by a permutation $\sigma = \sigma_{N}$ on $\{1, 2, \dotsc, N \}$ and write $\sigma=(\sigma(1), \dotsc, \sigma(N))$. A permutation $\sigma$ is called {\it convergent} if no set of $k$ elements in $\{1, \dotsc, N \}$ are simultaneously consecutive for $\{1, \dotsc, N \}$ and $\sigma$ for all $2 \leq k \leq N-2$. If $N \leq 7$, then this is equivalent to the dinner table problem. For $N \geq 8$, this is a new condition. The main idea of \cite{brown-apery} is to associate a rational function $f_{\sigma}$ and a differential ($N-3$)-form $\omega_{\sigma}$ to a given $\sigma$ as follows. Formally, we define
\begin{equation}\label{eq:I:f}
  f_{\sigma} = \prod_{i} \frac{z_i - z_{i+1}}{z_{\sigma(i)} - z_{\sigma(i+1)}} \quad \quad \text{and} \quad \quad \omega_{\sigma} = \frac{dt_1 \dotsc dt_{N-3}}{\prod_{i} z_{\sigma(i)} - z_{\sigma(i+1)}},
\end{equation}
where the product is over all indices $i$ modulo $N$, $z_1=0$ and $z_{N-1}=1$.  We remove factors with $z_{N}$ and then let $z_{i+1} \mapsto t_i$ for $i=1, 2, \dotsc, N-3$.
If we consider the {\it cellular integral}
\begin{equation}\label{eq:I}
  I_{\sigma}(n) = I_{N, \sigma}(n) \assign \int_{S_N} f_{\sigma}^n \,\, \omega_{\sigma}, 
\end{equation}
then $I_{\sigma}(n)$ converges if and only if $\sigma$ is convergent.
For $n=0$, we then obtain the cell-zeta values $\zeta_\sigma(N-3) = I_{\sigma}(0)$ studied in \cite{bcs-cell}, which are multiple zeta values of weight $N-3$. 
More generally, by \cite[Corollary~8.2]{brown-apery}, $I_{\sigma}(n)$ is a $\mathbb{Q}$-linear combination of multiple zeta values of weight less than or equal to $N-3$.
Suppose that this linear combination is of the form $A_{\sigma_N}(n) \zeta_\sigma(N-3)$, with $A_{\sigma_N}(n) \in \mathbb{Q}$, plus a combination of multiple zeta values of weight less than $N-3$ (note that $A_{\sigma_N}(n)$ is necessarily unique if multiple zeta values of different weight are linearly independent over $\mathbb{Q}$; alternatively, since this independence remains open, a unique value can be selected, unconditionally, by working motivically \cite{brown-apery}).
We then say that $A_{\sigma}(n) = A_{\sigma_N}(n)$ is the \emph{leading coefficient} of the cellular integral $I_{\sigma}(n)$.  By construction, $A_{\sigma}(0) = 1$.
Conjecturally, we can define leading coefficients in this fashion for all convergent $\sigma$.
Since we will be concerned with specific permutations $\sigma$, in which case the linear combinations of multiple zeta values can be made explicit, this issue will not disturb our discussion.

For example, if $N=5$, then $\sigma_5=(1,3,5,2,4)$ is the unique (up to dihedral symmetry, see Section~\ref{sec:proof1}) convergent permutation, $I_{\sigma_5}(n)$ recovers Beukers' integral \eqref{zeta2} after a change of variables, and the leading coefficients $A_{\sigma_5}(n)$ are the Ap{\'e}ry numbers $a(n)$. Similarly, for $N=6$, one obtains \eqref{zeta3} upon considering $\sigma_6=(1,5,3,6,2,4)$ and the leading coefficients $A_{\sigma_6}(n)$ are the Ap{\'e}ry numbers $b(n)$. 

This general framework raises some natural questions. Is there an analogue of \eqref{superapery1} and \eqref{superapery2} for $N \geq 7$? Do the leading coefficients $A_{\sigma}(n)$ satisfy supercongruences akin to \eqref{cos1}? The first main result generalizes \eqref{superapery1} to all odd weights greater than or equal to $3$.

\begin{theorem} \label{main1}
  For each odd positive integer $N \geq 5$, there exists a convergent $\sigma_N$ and a modular form $f_{k}(z) =: \sum_{n=1}^{\infty} \gamma_k(n) q^n$ of weight $k=N-2$ such that, for all primes $p \geq 5$,
  \begin{equation}
    A_{\sigma_N} \Bigl(\frac{p-1}{2} \Bigr) \equiv \gamma_{k}(p) \pmod{p^2}.
  \end{equation}
\end{theorem}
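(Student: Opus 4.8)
The plan is to reduce the congruence to a comparison, modulo $p^2$, between an explicit Apéry-like binomial sum and a hypergeometric expression for $\gamma_{N-2}(p)$. First I would make the permutation and its leading coefficients explicit. Extending the $N=5,6$ computations, I expect that for each odd $N\ge 5$ one can choose a convergent $\sigma_N$ for which
\[
  A_{\sigma_N}(n) = \sum_{k=0}^{n} \binom{n}{k}^2 \binom{n+k}{k}^{N-4};
\]
this specializes to $a(n)$ for $N=5$ and to $b(n)$ for $N=6$, and has the correct total degree $N-2=k$. Establishing this closed form is a matter of unwinding the definitions \eqref{eq:I:f}--\eqref{eq:lc} of $f_\sigma$ and $\omega_\sigma$ for the chosen $\sigma_N$ and extracting the top-weight coefficient $a_{n,\sigma}^{(N-3)}$ from the expansion of $\int_{S_N} f_\sigma^n\,\omega_\sigma$ into multiple zeta values; this step is purely combinatorial and parallels the known low-$N$ cases.

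Next I would analyze the left-hand side at $n=(p-1)/2$. Using $(p-1)/2\equiv -\tfrac12\pmod p$ together with $\binom{-1/2}{k}=(-1)^k\binom{2k}{k}/4^k$ and $\binom{k-1/2}{k}=\binom{2k}{k}/4^k$, the summand collapses modulo $p$ to $\bigl(\binom{2k}{k}/4^k\bigr)^{N-2}=\bigl((1/2)_k/k!\bigr)^{N-2}$, whence
\[
  A_{\sigma_N}\!\Bigl(\tfrac{p-1}{2}\Bigr) \equiv \sum_{k=0}^{(p-1)/2} \frac{\bigl((1/2)_k\bigr)^{N-2}}{(k!)^{N-2}} \pmod p,
\]
a truncation of the classical ${}_{N-2}F_{N-3}$ with all upper parameters equal to $\tfrac12$, evaluated at $1$. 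Upgrading this to a congruence modulo $p^2$ requires keeping the next $p$-adic term in each binomial factor; the resulting corrections are governed by harmonic sums and Wolstenholme-type relations, and the outcome should be an expression for $A_{\sigma_N}((p-1)/2)$ modulo $p^2$ as a single finite-field (or $p$-adic) hypergeometric value.

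In parallel I would express $\gamma_{N-2}(p)$ hypergeometrically, invoking McCarthy's $p$-adic hypergeometric series ${}_{n}G_{n}[\cdots]_p$ and the associated finite-field functions, which are built to encode Fourier coefficients of modular forms. The goal is a formula of the schematic shape $\gamma_{N-2}(p)={}_{N-2}G_{N-2}[\cdots]_p$, up to an explicit power of $p$ and normalization, valid for all $p\ge 5$. The theorem then follows by matching the two computations: one shows that the truncated Apéry-like sum and McCarthy's ${}_{N-2}G_{N-2}$-value coincide modulo $p^2$. For $N=5$ this matching is precisely Ahlgren's supercongruence \eqref{superapery1}, realized through the weight-$3$ newform $\eta^6(4z)$.

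The hard part is this final comparison, together with producing the hypergeometric description of $\gamma_{N-2}(p)$ uniformly in $N$. In the weight-$3$ and weight-$4$ situations one has an explicit modular variety---a $K3$ surface and a rigid Calabi--Yau threefold, respectively---whose $L$-function furnishes the bridge between point counts, finite-field hypergeometric sums and the coefficients $\gamma_k(p)$; for general odd $N$ no such geometric model is at hand, so the argument must run through the intrinsic hypergeometric--modular dictionary rather than through geometry. Pinning down the correct weight-$(N-2)$ form $f_k$ and establishing the $p^2$-exact identity between its $p$th coefficient and the truncated sum is therefore the crux, and the step where McCarthy's framework does the essential work.
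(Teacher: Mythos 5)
Your proposal has two genuine gaps, and the second is fatal. First, the closed form $A_{\sigma_N}(n)=\sum_{k=0}^{n}\binom{n}{k}^2\binom{n+k}{k}^{N-4}$ is a guess that the paper neither makes nor needs: for the configurations actually used, the leading coefficients for odd $N=2M+1$ are $A_{\sigma_N}(n)=a(n)^{M-1}$, i.e.\ \emph{powers} of the single Ap\'ery sum (Corollary \ref{cor:mult:a}, obtained by iterating the partial multiplication of cellular integrals in Proposition \ref{prop:mult:a}), not a single sum with higher powers of $\binom{n+k}{k}$ inside. You would have to prove both convergence of a suitable $\sigma_N$ and this closed form, and nothing in the known $N=5,6$ cases suggests your formula is a leading coefficient for $N\geq 7$. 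Second, and more seriously, your final matching step has no available proof: at $n=(p-1)/2$ your sum truncates a ${}_{N-2}F_{N-3}$ with all parameters $\tfrac12$, whose finite-field/$p$-adic avatar is attached to a hypergeometric Galois representation of rank $N-2\geq 5$ for $N\geq 7$. Its Frobenius traces are not the coefficients of a single weight-$(N-2)$ newform, and no general theorem in McCarthy's ${}_nG_n$ framework produces the mod $p^2$ identity you need uniformly in $N$; the known results of this shape (Ahlgren, Ahlgren--Ono, Kilbourn, etc.) exist only in small rank, precisely because there an underlying modular variety or two-dimensional motive is present. Deferring the crux to ``the intrinsic hypergeometric--modular dictionary'' is deferring it to an open problem.

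The paper avoids all of this by never introducing new hypergeometric machinery for odd $N$. Writing $N=2l+3$ and $k=2l+1$, it has $A_{\sigma_N}\bigl(\tfrac{p-1}{2}\bigr)=a\bigl(\tfrac{p-1}{2}\bigr)^{l}$, and Ahlgren's known congruence \eqref{superapery1} gives $a\bigl(\tfrac{p-1}{2}\bigr)\equiv\gamma_3(p)\pmod{p^2}$, hence $A_{\sigma_N}\bigl(\tfrac{p-1}{2}\bigr)\equiv\gamma_3(p)^{l}\pmod{p^2}$. The remaining point is purely about CM forms: since $f_3=\eta^6(4z)$ has CM by $\mathbb{Q}(i)$, Theorem \ref{thm_CM} constructs, via Hecke characters on $\mathbb{Z}[i]$, forms $f_k$ of every weight with $\gamma_k(p)=\gamma^+(p)^{k-1}+\gamma^-(p)^{k-1}$ where $\gamma^+(p)\gamma^-(p)=p$, and the binomial expansion in Corollary \ref{cor_Coeffs} yields $\gamma_3(p)^{l}=\gamma_{2l+1}(p)+\sum_{t\geq 1}\binom{l}{t}p^{2t}\gamma_{2l-4t+1}(p)+\cdots\equiv\gamma_{2l+1}(p)\pmod{p^2}$, since every correction term carries a factor $p^{2t}$ (Corollary \ref{cor_AperyModCoeffs}). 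So the only analytic input is the already-proved weight-$3$ case, and the weight is raised by an algebraic identity among Hecke character values --- exactly the mechanism your proposal lacks.
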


As the product of cellular integrals is not necessarily a cellular integral (see Example \ref{eg:non}), powers of the Ap{\'e}ry numbers $a(n)$ are not automatically leading coefficients. We prove Theorem \ref{main1} by carefully constructing an explicit family of convergent $\sigma_N$ such that $A_{\sigma_N}(n) = a(n)^{(N-3)/2}$ and the required modular forms $f_k(z)$. For example, if $N=5$, then $A_{\sigma_5}(n)=a(n)$ and $f_3(z) = \eta^{6}(4z)$. Thus, we recover \eqref{superapery1}. Now, consider the convergent permutation $\sigma_8 = (8, 3, 6, 1, 4, 7, 2, 5)$ and 
$$
\eta^{12}(2z) =: \sum_{n=1}^{\infty} \gamma(n) q^n,
$$
the unique newform in $S_6(\Gamma_0(4))$. Our second main result is a higher weight version of \eqref{superapery2}.

\begin{theorem} \label{main2} 
Let $p$ be an odd prime. Then 
  \begin{equation} \label{even}
    A_{\sigma_8} \Bigl(\frac{p-1}{2} \Bigr) \equiv \gamma(p) \pmod{p^2}.
  \end{equation}
\end{theorem}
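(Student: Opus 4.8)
The plan is to treat Theorem~\ref{main2} as the even-weight analogue of the $\zeta(3)$ supercongruence \eqref{superapery2}, following the same three-stage template that underlies the Ahlgren--Ono argument but, since the statement is uniform in all odd primes, replacing the geometric input by a $p$-adic hypergeometric one. The first step is to produce a closed, Ap\'ery-like expression for the leading coefficient $A_{\sigma_8}(n)$. Because $\sigma_8=(8,3,6,1,4,7,2,5)$ is a fixed convergent permutation, the rational function $f_{\sigma_8}$ and the form $\omega_{\sigma_8}$ of \eqref{eq:I:f} are completely explicit; expanding the cellular integral $I_{\sigma_8}(n)$ of \eqref{eq:I} in a basis of multiple zeta values of weight at most $5$ isolates the coefficient $a_{n,\sigma_8}^{(5)}$ in \eqref{eq:lc} as a single finite sum of products of binomial coefficients. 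I would record this formula (presumably already obtained earlier in the paper for this $N=8$ case) and take it as the definition of the sequence to be analyzed.

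Second, I would produce a hypergeometric description of the target coefficients $\gamma(p)$ of the newform $\eta^{12}(2z)\in S_6(\Gamma_0(4))$. In the classical weight-$4$ case this came from a Calabi--Yau threefold whose point counts express $\beta(p)$ through a finite-field (Gaussian) hypergeometric series, typically only for $p$ in a prescribed residue class. Instead I would appeal to McCarthy's $p$-adic hypergeometric functions, which can represent the $p$th Fourier coefficient of a suitable modular form as a value of a function of type ${}_{n}G_{n}[\cdots]_p$ valid for \emph{all} odd primes $p$ simultaneously. The aim of this step is thus an identity of the shape $\gamma(p)=(\text{explicit unit})\cdot{}_{n}G_{n}[\cdots]_p$, after which the trace-of-Frobenius interpretation and any explicit higher-dimensional variety are no longer needed.

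Third, and this is where the two sides are forced to meet, I would reduce $A_{\sigma_8}\bigl(\tfrac{p-1}{2}\bigr)$ modulo $p^2$ to exactly the same $p$-adic hypergeometric quantity. This is a direct but delicate manipulation: substitute $n=(p-1)/2$ into the binomial sum, rewrite the central binomial and shifted-factorial factors through the $p$-adic Gamma function via the Gross--Koblitz formula, and control the error by Wolstenholme-type harmonic-sum congruences so that everything beyond the first two $p$-adic digits is discarded. Matching the resulting expression with the identity for $\gamma(p)$ from the second step then yields the congruence modulo $p^2$.

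The main obstacle will be this final matching at the level of $p^2$ rather than $p$. Producing the congruence modulo $p$ is comparatively routine, essentially an equality of truncated hypergeometric sums, but the extra power of $p$ in a genuine supercongruence forces one to track the next-order term exactly. I expect this to require a Dwork-style analysis of the truncated series together with precise estimates on the relevant quotients of $p$-adic Gamma functions, and it is the success of those second-order estimates---rather than the algebraic bookkeeping of the binomial identities---that will determine whether the proof goes through.
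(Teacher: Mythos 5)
Your step 1 is fine and matches the paper: the constrained quadruple binomial sum for $A_{\sigma_8}(n)$ is exactly Proposition~\ref{prop:A86}, so citing it is legitimate. But steps 2 and 3 carry the entire content of the theorem, and both are missing rather than merely compressed. The paper's proof (Theorem~\ref{thm_Main2}) does not pass through a single $p$-adic hypergeometric value, Gross--Koblitz, or the $p$-adic Gamma function at all. Instead it uses the trace formula of Frechette--Ono--Papanikolas \cite{FOP}, together with $\dim S_6(\Gamma_0(4))=1$, to write $\gamma(p)$ in terms of the \emph{fourth moment} $\sum_{\lambda=2}^{p-1} a(p,\lambda)^4$ of Frobenius traces of the Legendre family; Koike's identity \cite{Kk} converts each $a(p,\lambda)$ into Greene's finite-field ${}_2F_1(\lambda)$, and the known mod-$p^2$ expansion of $p\cdot{}_2F_1$ due to Loh--Rhodes \cite{LR} (used as \eqref{for_LR}) supplies precisely the second-order information you propose to extract by a Dwork-style analysis. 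Raising to the fourth power and summing over $\lambda$ with character orthogonality \eqref{for_TOrthEl} is what creates the four indices $k_1,\dotsc,k_4$ with the constraint $\sum k_i \equiv 0 \pmod{p-1}$; the harmonic-sum corrections are then killed mod $p$ by the symmetry $k \mapsto \tfrac{p-1}{2}-k$ (Lemma~\ref{lem_BinHar1}) and a power-sum/derivative argument (Lemma~\ref{lem_BinHar2} and the polynomial $P(k_i)$), and Lemma~\ref{lem_BinHar3} finally converts the constraint $\sum k_i = p-1$ into $k_1+k_2=k_3+k_4$, matching \eqref{eq:A86} at $n=\tfrac{p-1}{2}$.

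The concrete gap in your plan is structural, not just a deferred estimate. Even granting an identity $\gamma(p)=(\text{unit})\cdot{}_nG_n[\cdots]_p$ valid for all odd $p$ (you cite none, and the known all-prime representations of the coefficients of $\eta^{12}(2z)$ in the literature --- \cite{A2}, \cite{FOP} --- are exactly moment formulas of the above type, not single hypergeometric values), a Gross--Koblitz reduction of one such value yields a \emph{single-index} truncated hypergeometric sum, whereas the target $A_{\sigma_8}(\tfrac{p-1}{2})$ is the constrained quadruple sum \eqref{eq:A86}. Your proposal offers no mechanism to bridge these two shapes modulo $p^2$, and that bridge is the whole difficulty: in the paper it arises structurally, because the fourth power of the Loh--Rhodes single sum, summed over $\lambda$ with orthogonality, \emph{is} a constrained quadruple sum --- no second-order $p$-adic Gamma estimates are needed beyond the already-available congruence for ${}_2F_1$. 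Since you yourself flag the mod-$p^2$ matching as the step that ``will determine whether the proof goes through'' and supply no argument or reference for it, what you have is a plausible research plan whose decisive step is unresolved, and whose single-value framing points away from the moment structure that actually makes the matching possible. (Two minor corrections: the weight-4 precedent \eqref{superapery2} of Ahlgren--Ono holds for all odd primes, not only a residue class; and the paper verifies $p=3$ directly, running the hypergeometric machinery only for $p \geq 5$.)
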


In Section 10.1 of \cite{brown-apery}, Brown provided the following table for the number $\mathcal{C}_{N}$ of convergent $\sigma_{N}$ (up to symmetries):

\begin{table}[h]
{\tabulinesep=1mm
\begin{tabu}{|c|ccccccc|} \hline
$N$ & 5 & 6 & 7 & 8 & 9 & 10 & 11 \\ \hline
$\mathcal{C}_{N}$ & 1 & 1 & 5 & 17 & 105 & 771 & 7028 \\ \hline
\end{tabu}} 
\caption{\label{tbl:convergN} $\mathcal{C}_{N}$ for $5 \leq N \leq 11$}
\end{table}

Using the techniques from Section 3, we have obtained explicit binomial sum expressions for all $898$ of the leading coefficients $A_{\sigma_{N}}(n)$ for $7 \leq N \leq 10$. For example, if $N=7$, then the convergent permutations and associated leading coefficients are given in Table \ref{tbl:sigma7LC7}. The ordinary generating functions of these five sequences satisfy fourth order differential equations of Calabi--Yau type, and thus appear as sequences in the table \cite{aesz} (where they are numbered as 193, 243, 101, 198, and 27). The remaining list of convergent permutations and binomial sum expressions for the leading coefficients are available upon request. Based on numerical evidence, we make the following conjecture which suggests that \eqref{cos1} is a generic property of leading coefficients.

\begin{table}[h]
{\tabulinesep=1mm
\begin{tabu}{|c|c|} \hline
$(7,2,4,1,6,3,5)$ & $\displaystyle \sum_{j, k = 0}^n \binom{n}{k}^2 \binom{n}{j}^2 \binom{n + k + j}{n} \binom{k + j}{k}$ \\ \hline
$(7,2,4,6,1,3,5)$ & $ \displaystyle \sum_{j, k = 0}^n \binom{n}{k} \binom{n}{j} \binom{n + k}{n} \binom{n + j}{n} \binom{n + k + j}{n} \binom{n}{k - j}$ \\ \hline
$(7,2,5,1,3,6,4)$ & $\displaystyle \left[ \sum_{k = 0}^n \binom{n}{k}^2 \binom{n + k}{n} \right]^2$ \\ \hline
$(7,2,5,1,4,6,3)$ & $\displaystyle \sum_{j, k = 0}^n \binom{n}{k}^2 \binom{n}{j}^2 \binom{n + k}{n} \binom{k + j}{n}$ \\ \hline
$(7,3,6,2,5,1,4)$ & $\displaystyle \sum_{j, k = 0}^n \binom{n}{k}^2 \binom{n}{j}^2 \binom{n + k}{n} \binom{n - k + j}{n}$ \\ \hline
 \end{tabu}} 
 \caption{\label{tbl:sigma7LC7} Convergent permutations $\sigma_7$ and leading coefficients $A_{\sigma_7}(n)$}
 \end{table}

\begin{conjecture} \label{conj1}
  For each $N \geq 5$ and convergent $\sigma_N$, the leading coefficients $A_{\sigma_N}(n)$ satisfy
  \begin{equation}
    A_{\sigma_N}(mp^r) \equiv A_{\sigma_N}(mp^{r-1}) \pmod{p^{3r}}
  \end{equation}
  for all primes $p \geq 5$ and integers $m$, $r \geq 1$.
\end{conjecture}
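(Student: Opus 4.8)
Since Conjecture~\ref{conj1} is a sweeping generalization of Coster's supercongruences \eqref{cos1}--\eqref{cos2}, my plan is to attack it through the explicit binomial-sum representations of the $A_{\sigma_N}(n)$ furnished by the techniques of Section~3. Thus I would assume each leading coefficient is written as a finite multiple sum $A_{\sigma_N}(n) = \sum_{\mathbf{k}} c_n(\mathbf{k})$, where $\mathbf{k} = (k_1,\dots,k_m)$ ranges over a lattice polytope scaled by $n$ and $c_n(\mathbf{k})$ is a product of (quotients of) binomial coefficients whose arguments are integer-affine in $n$ and $\mathbf{k}$, exactly as in \eqref{AperyNumbers}. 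The uniformity of this shape is what I would exploit, since treating each of the $898$ sequences individually in the style of \cite{coster} is infeasible.

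First I would establish the non-super congruence $A_{\sigma_N}(mp^r)\equiv A_{\sigma_N}(mp^{r-1}) \pmod{p^r}$. For sequences expressible as diagonals of rational functions or as constant terms of powers of Laurent polynomials---which multiple binomial sums of this type are---such Gauss--Dwork congruences hold essentially automatically from the $p$-adic analytic structure of the underlying generating function, independently of any Calabi--Yau hypothesis. This step fixes the correct $p$-adic skeleton and, crucially, identifies the diagonal contribution: writing $\mathbf{k}=p^r\boldsymbol{\ell}+\mathbf{j}$, the terms with $\mathbf{j}\neq \mathbf{0}$ should be negligible and the terms with every $k_i$ divisible by $p^r$ should reproduce $A_{\sigma_N}(mp^{r-1})$.

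The substance of the problem is upgrading the modulus from $p^r$ to $p^{3r}$. Here I would reduce to the base case $r=1$ by an induction on $r$ that isolates the locus $p^r\mid k_i$, and for $r=1$ I would split $A_{\sigma_N}(mp)$ according to the $p$-adic valuations of the summation indices. On the fully divisible locus $p\mid k_i$ for all $i$, the Ljunggren--Jacobsthal supercongruence $\binom{ap}{bp}\equiv \binom{a}{b}\pmod{p^3}$ (valid for $p\ge 5$) applied factor by factor should collapse the contribution to $A_{\sigma_N}(m) \pmod{p^3}$. The remaining terms must then be shown to vanish modulo $p^3$: by Kummer's theorem, an index $k_i$ with $p\nmid k_i$ forces at least one power of $p$ from each binomial of the form $\binom{mp}{\,\cdot\,}$ in which it appears, and the difficulty is to verify that, summed over all factors of $c_{mp}(\mathbf{k})$, the total valuation always attains $3$.

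I expect this last valuation bookkeeping to be the main obstacle, precisely because it must be carried out uniformly over all admissible binomial shapes rather than for a single fixed sequence. A purely combinatorial bound may well fail for some $\sigma_N$ whose summand does not carry enough binomial factors to supply three powers of $p$, in which case the cube would be genuinely a consequence of the geometry---the Hodge-theoretic structure of the cellular period and the Frobenius action on the associated Picard--Fuchs system---rather than of the combinatorics alone. A more robust route, therefore, would be to replace the term-by-term estimate by a Dwork-style analysis of the Frobenius structure on the differential equation annihilating $\sum_n A_{\sigma_N}(n) x^n$, from which supercongruences of Coster type can in principle be extracted; reconciling this analytic input with the explicit binomial sums, uniformly across $N$, is where I anticipate the real work to lie.
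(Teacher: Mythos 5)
First, a point of order: the statement you are proving is Conjecture~\ref{conj1}, which the paper does \emph{not} prove. The authors state it on the strength of numerical evidence gathered from explicit binomial-sum expressions for all $898$ leading coefficients with $7 \leq N \leq 10$ (obtained by the constant-term method of Section~3, as in the derivation of $J_{\sigma}(n)$ for $\sigma_8$). So there is no proof in the paper to compare against, and your text is accordingly a research plan rather than a proof; judged as a proof attempt it has a genuine gap, which, to your credit, you partially flag yourself.

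The gap is concretely this: your strategy for the crucial upgrade from $p^r$ to $p^{3r}$ is term-by-term valuation bookkeeping on the locus where some $p \nmid k_i$, and that fails already in the base case of the Ap\'ery numbers \eqref{AperyNumbers}. For $a(mp) = \sum_k \binom{mp}{k}^2\binom{mp+k}{k}$ with $p \nmid k$, Kummer's theorem gives $p^2$ from $\binom{mp}{k}^2$ but in general nothing from $\binom{mp+k}{k}$, so individual off-diagonal terms are only $O(p^2)$; Coster's proof of \eqref{cos1} does not show these terms vanish modulo $p^{3r}$ individually, but exploits cancellation \emph{between} terms via harmonic-sum identities (compare the reflection arguments $k \mapsto m-k$ in the paper's Lemmas~\ref{lem_BinHar1} and \ref{lem_BinHar3}, which are exactly this kind of pairing, used there modulo $p^2$ for a different purpose). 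Such cancellation identities are shape-specific, which is precisely why no uniform argument over all $898$ binomial shapes is known and why the statement is left as a conjecture. Your first step (the modulus-$p^r$ Dwork--Gauss congruence via the constant-term representation $J_{\sigma}(n)$) is plausible and consistent with the paper's Section~3, and your closing suggestion---that the cube should come from the Frobenius structure on the Picard--Fuchs equation, or from the geometry of the cellular period, in the spirit of \cite{s}---is a reasonable assessment of where a proof might come from; but as written, the decisive step is not carried out, and indeed cannot be carried out by the per-term Ljunggren--Jacobsthal argument you propose.
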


Given Theorems \ref{main1} and \ref{main2}, it is natural to wonder if such supercongruences hold for all leading coefficients $A_{\sigma_N}(n)$. Do they arise from $L$-series attached to Galois representations? It appears that a result similar to \eqref{superapery2} and \eqref{even} also holds for a convergent $\sigma_{10}$ and modular form of weight 8. This and other observations will be the subject of forthcoming work. Finally, it would be of interest to examine Theorems \ref{main1} and \ref{main2} and Conjecture \ref{conj1} from the recent ``motivic" perspective of \cite{rrvw}. 

The paper is organized as follows. In Section~\ref{sec:proof1}, we first discuss the necessary background on dihedral symmetries, the multiplicative structure of cellular integrals and modular forms with complex multiplication, then prove Theorem \ref{main1}. In Section~\ref{sec:binomial}, we explain how to derive multiple binomial sum representations for the leading coefficients. In Section~\ref{sec:proof2}, we prove some preliminary results on combinatorial congruences, recall finite field hypergeometric series and then prove Theorem \ref{main2}. 

\section{Proof of Theorem \ref{main1}} \label{sec:proof1}

\subsection{Dihedral symmetries and multiplicative structures} \label{sec:dihedral}

Let $\Sigma_N$ be the symmetric group on $\{ 1, 2, \ldots, N \}$. A
{\emph{dihedral structure}} on $\{ 1, 2, \ldots, N \}$ is an equivalence class
of permutations $\sigma \in \Sigma_N$, where the equivalences are generated by
\begin{eqnarray*}
  (\sigma_1, \sigma_2, \ldots, \sigma_N) & \sim & (\sigma_2, \sigma_3, \ldots,
  \sigma_N, \sigma_1)\\
  (\sigma_1, \sigma_2, \ldots, \sigma_N) & \sim & (\sigma_N, \sigma_{N - 1},
  \ldots, \sigma_1) .
\end{eqnarray*}
A {\emph{configuration}} on $\{ 1, 2, \ldots, N \}$ is an equivalence class
$[\delta, \delta']$ of pairs $(\delta, \delta')$ of dihedral structures modulo
the equivalence relations
\begin{equation*}
  (\delta, \delta') \sim (\sigma \delta, \sigma \delta')
\end{equation*}
for $\sigma \in \Sigma_N$. Associated to every permutation $\sigma \in
\Sigma_N$ is the configuration $[\sigma] \assign [\operatorname{id}, \sigma]$.
Clearly, every configuration can be thus represented. Indeed, the
configurations on $\{ 1, 2, \ldots, N \}$ can be identified with the double
cosets $D_{2 N} \backslash \Sigma_N / D_{2 N}$, where $D_{2 N}$ are the
dihedral permutations of $\Sigma_N$. Henceforth, we will not make a distinction between permutations and configurations. The {\emph{dual}} of a configuration
$[\delta, \delta']$ is the configuration $[\delta', \delta]$. Equivalently,
the dual of $[\sigma]$ is $[\sigma]^{\vee} \assign [\sigma^{- 1}]$. For
further details, we refer to \cite[Section~3.1]{brown-apery}. The notion of
a configuration is important for our considerations because, up to a possible
factor of $(- 1)^n$, the cellular integral $I_{\sigma} (n)$ only depends on
the configuration $[\sigma]$.

In \cite[Section~6]{brown-apery}, Brown describes the following partial
multiplication on pairs of dihedral structures. A pair of dihedral structures
$(\delta, \delta')$ on $\{ 1, 2, \ldots, N \}$ is {\emph{multipliable}} along
the triple $t = (t_1, t_2, t_3)$, where $t_1, t_2, t_3 \in \{ 1, 2, \ldots, N
\}$ are distinct, if the elements $t_1, t_2, t_3$ are consecutive in $\delta$
and the elements $t_1, t_3$ are consecutive in $\delta'$.

Let $(\alpha, \alpha')$ and $(\beta, \beta')$ be dihedral structures on $X= \{ 1,
2, \ldots, N \}$ and $Y = \{ 1, 2, \ldots, M \}$, respectively. If $(\alpha,
\alpha')$ is multipliable along $s$, and $(\beta', \beta)$ (the dual of
$(\beta, \beta')$) is multipliable along $t$, then the product
\begin{equation*}
  (\gamma, \gamma') : = (\alpha, \alpha') \star_{s, t} (\beta, \beta')
\end{equation*}
is a pair of dihedral structures on the disjoint union $Z$ of $X$ and $Y$,
where each of the three elements $s_1, s_2, s_3$ from $X$ gets identified with the corresponding
element $t_1, t_2, t_3$ from $Y$. The dihedral structure $\gamma$ is the
unique structure such that its restriction to $X$ (respectively, $Y$)
coincides with $\alpha$ (respectively, $\beta$). We say that $\gamma$ was
obtained by shuffling $\alpha$ and $\beta$ (more general such shuffles are
described in \cite[Section~2.3.1]{bcs-cell}).  $\gamma'$ is likewise obtained
by shuffling $\alpha'$ and $\beta'$.

In order to work explicitly, we identify this disjoint union of $X$ and $Y$
with the set $Z = \{ 1, 2, \ldots, M + N - 3 \}$ in such a way that $Y$ is the
natural subset of $Z$, while the $N - 3$ elements $1, 2, \ldots, N$ of $X$,
with $s_1, s_2, s_3$ removed, are identified, in that order, with the elements
$M + 1, M + 2, \ldots, M + N - 3$ of $Z$. We note that different choices for
this identification lead to the same configuration $[\gamma, \gamma']$.

\begin{example}
  \label{eg:mult:s5:ab}For illustration, let us multiply $(\alpha, \alpha') =
  ((1, 2, 3, 4, 5), (1, 3, 5, 2, 4))$ with $(\beta, \beta') = (\alpha,
  \alpha')$ along $s = (1, 2, 3)$, $t = (4, 2, 5)$. Here, $X = \{ 1, 2, 3, 4,
  5 \}$, $Y = \{ 1, 2, 3, 4, 5 \}$ and $Z = \{ 1, 2, \ldots, 7 \}$. The
  elements $1, 2, 3, 4, 5$ of $Y$ are identified with the elements $1, 2, 3,
  4, 5$ of $Z$, and the elements $1, 2, 3, 4, 5$ of $X$ are identified with
  the elements $4, 2, 5, 6, 7$ of $Z$. In other words, we replace $(\alpha,
  \alpha')$ with $(\alpha, \alpha') = ((4, 2, 5, 6, 7), (4, 5, 7, 2, 6))$. In
  order to obtain $\gamma$, we shuffle $\alpha = (4, 2, 5, 6, 7)$ and $\beta =
  (1, 2, 3, 4, 5)$ such that both dihedral structures are preserved (because
  of the conditions of multipliability there is a unique such shuffle). The
  result is $\gamma = (1, 2, 3, 4, 7, 6, 5)$. Likewise, shuffling $\alpha' =
  (4, 5, 7, 2, 6)$ and $\beta' = (1, 3, 5, 2, 4)$ results in $\gamma' = (1, 3,
  5, 7, 2, 6, 4)$. We refer to \cite[Example~6.3]{brown-apery} for another
  example accompanied by a helpful illustration.
\end{example}

The main interest in this partial multiplication stems from the following
result. For its statement, recall that every configuration $[\gamma, \gamma']$
can be represented as $[\sigma] = [\operatorname{id}, \sigma]$. Since, up to a sign,
the cellular integral $I_{\sigma}$ only depends on the configuration
$[\sigma]$, we may write $I_{[\gamma, \gamma']} = I_{\sigma}$. Likewise, we
write $A_{[\gamma, \gamma']} = A_{\sigma}$ for the corresponding leading
coefficients.

\begin{proposition}[\cite{brown-apery}, Proposition~6.5]
  \label{prop:mult:I}Suppose that
  \begin{equation*}
    (\gamma, \gamma') = (\alpha, \alpha') \star_{s, t} (\beta, \beta')
  \end{equation*}
  for some choice of $s, t$. Then, for all $n \geq 0$, possibly up to a
  sign,
  \begin{equation*}
    I_{[\gamma, \gamma']} (n) = I_{[\alpha, \alpha']} (n) I_{[\beta, \beta']}
     (n) .
  \end{equation*}
  In particular, $A_{[\gamma, \gamma']} (n) = A_{[\alpha, \alpha']} (n)
  A_{[\beta, \beta']} (n)$.
\end{proposition}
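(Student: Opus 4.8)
The plan is to establish the factorization at the level of integrands and domains, and then to invoke Fubini's theorem; the statement about leading coefficients will follow formally from the statement about the integrals themselves. First I would set up explicit product coordinates adapted to the identification $Z = \{1, \ldots, M+N-3\}$ described above, in which $Y = \{1, \ldots, M\}$ is the natural subset and the remaining elements of $X$ (with $s_1, s_2, s_3$ deleted) occupy $M+1, \ldots, M+N-3$. This splits the $N+M-6$ simplicial coordinates on $\mathcal{M}_{0,N+M-3}$ into an $\alpha$-block of $N-3$ coordinates and a $\beta$-block of $M-3$ coordinates. The whole proof then amounts to showing that, in these coordinates and up to sign, $f_\gamma = f_\alpha \cdot f_\beta$, $\omega_\gamma = \omega_\alpha \wedge \omega_\beta$, and that the cell $S_{N+M-3}$ corresponds to $S_N \times S_M$.

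The integrand factorization is the combinatorial heart of the argument. Both $f_\sigma$ and $\omega_\sigma$ are built from products of edge factors $z_a - z_b$ taken over the consecutive pairs of the two dihedral structures. Since $\gamma$ (respectively $\gamma'$) is obtained by shuffling $\alpha$ and $\beta$ (respectively $\alpha'$ and $\beta'$) while preserving both structures, every consecutive pair of $\gamma$ is either internal to the $\alpha$-part, internal to the $\beta$-part, or one of the finitely many pairs created at the gluing locus. The key observation I would exploit is that the multipliability of $(\alpha,\alpha')$ along $s$ and of $(\beta',\beta)$ along $t$ means precisely that the three identified points $s=t$ serve as the common normalization points, playing the roles of $0$, $1$, $\infty$ for the two factors. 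Thus, after restricting to the $\alpha$-block with the $\beta$-coordinates frozen at the gluing configuration, the edge products reproduce exactly $f_\alpha$ and $\omega_\alpha$ in their own normalization, and symmetrically for $\beta$. Tracking the gluing edges together with the Jacobian of the coordinate change then yields the asserted factorizations of $f_\gamma$ and $\omega_\gamma$ up to an overall sign.

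The main obstacle I anticipate is the domain decomposition, i.e. showing that integrating $f_\gamma^n \omega_\gamma$ over the cell $S_{N+M-3}$ equals the product of the integral of $f_\alpha^n \omega_\alpha$ over $S_N$ with the integral of $f_\beta^n \omega_\beta$ over $S_M$. The simplex $\{0 < t_1 < \cdots < t_{N+M-6} < 1\}$ does not split as a literal Cartesian product in the original ordering, so I would instead use the fibration structure of $\mathcal{M}_{0,N+M-3}$ coming from forgetting the $\alpha$-points: the gluing triple provides the base points $0$, $1$, $\infty$ of the fibre, the $\alpha$-coordinates sweep out a copy of $S_N$ in each fibre, and the $\beta$-coordinates parametrize the base copy of $S_M$. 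Verifying that the boundary identifications and orientations match, which is where the $(-1)^n$ ambiguity (and only that ambiguity) enters, is the delicate step; once it is in place, Fubini gives $I_{[\gamma,\gamma']}(n) = \pm\, I_{[\alpha,\alpha']}(n)\, I_{[\beta,\beta']}(n)$.

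Finally, to pass to leading coefficients I would expand both sides in a basis of multiple zeta values. Because weight is additive under the multiple zeta value product, the top-weight part of $I_{[\alpha,\alpha']}(n) I_{[\beta,\beta']}(n)$, of weight $(N-3)+(M-3) = (N+M-3)-3$, is exactly the product of the individual top-weight parts, so comparing leading coefficients gives $A_{[\gamma,\gamma']}(n) = \pm\, A_{[\alpha,\alpha']}(n) A_{[\beta,\beta']}(n)$. The residual sign is pinned down by the normalization $A_\sigma(0) = 1$: evaluating at $n=0$ forces the constant to be $+1$, which removes the sign from the leading-coefficient identity and completes the proof.
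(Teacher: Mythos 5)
You should note at the outset that the paper contains no proof of this statement: it is imported verbatim as Proposition~6.5 of \cite{brown-apery}, so the only meaningful comparison is with Brown's argument there. Your outline reproduces its essential mechanism: after normalizing the identified triple $s=t$ to play the roles of $0,1,\infty$, the coordinates of the two factors separate, $f_\gamma$ and $\omega_\gamma$ factor (up to sign) into the corresponding data for $(\alpha,\alpha')$ and $(\beta,\beta')$, and the cell for $\gamma$ fibers over the cell for $\beta$ with fiber a copy of the cell for $\alpha$. One point you could make sharper: the reason the simplex splits at all is that multipliability of $(\alpha,\alpha')$ along $s$ forces the $N-3$ unidentified points of $X$ to occupy a single consecutive arc of $\gamma$ between gluing points (visible in Example~\ref{eg:mult:s5:ab}, where the new points $6,7$ sit in one block), so the shuffle never genuinely interleaves the two coordinate groups; this is what licenses your Fubini step.

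Two steps are weaker than you suggest. First, your sign-fixing does not work as stated: the ambiguity in these integrals is of the form $(\pm 1)^n$ --- the paper itself records that $I_\sigma(n)$ depends on the representative of $[\sigma]$ only up to a factor $(-1)^n$ --- and evaluating at $n=0$ cannot rule out an $n$-dependent sign. What actually removes the sign from the leading-coefficient identity is positivity: for a convergent $\sigma$, the factors $z_{\sigma(i)}-z_{\sigma(i+1)}$ are nonvanishing on the open cell, so $f_\sigma^n\,\omega_\sigma$ has constant sign there, the normalized leading coefficients are positive, and $A_{[\gamma,\gamma']}(n)=A_{[\alpha,\alpha']}(n)A_{[\beta,\beta']}(n)$ follows by comparing absolute values rather than by specializing to $n=0$. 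Second, ``expand both sides in a basis of multiple zeta values'' presupposes that multiple zeta values of different weights are linearly independent over $\mathbb{Q}$, which is open; the paper explicitly flags this after \eqref{eq:lc}. Your top-weight comparison is legitimate either at the level of motivic periods, where the weight grading is genuine (this is Brown's setting), or under the paper's convention of fixed combinations $\zeta_i$ with explicit coefficients $a_{n,\sigma}^{(i)}$; without one of those framings, ``follows formally'' oversells the last step.
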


\begin{example}
  \label{eg:mult:s5:s7}Recall that, for $\sigma_5 = (1, 3, 5, 2, 4)$, the
  leading terms $A_{\sigma_5} (n)$ of the corresponding cellular integrals are
  the Ap\'ery numbers $a (n)$. The configuration $[\sigma_5]$ is represented
  by the pairs of dihedral structures $(\alpha, \alpha') = (\beta, \beta')$
  from Example~\ref{eg:mult:s5:ab}. Let $(\gamma, \gamma')$ be the product of
  these two pairs along $s, t$ as chosen in Example~\ref{eg:mult:s5:ab}.
  Observe that, as a configuration, $[\gamma, \gamma'] = [\sigma_7]$ with
  $\sigma_7 = (1, 3, 7, 5, 2, 6, 4)$. By Proposition~\ref{prop:mult:I},
  $A_{\sigma_7} (n) = a (n)^2$.
\end{example}

Similarly, many leading coefficients $A_{\sigma} (n)$ are products of leading
coefficients of lower order. However, it is not the case that products of
leading coefficients are always leading coefficients.

\begin{example} \label{eg:non}
  Consider the configuration $\sigma_8 = (8, 3, 6, 1, 4, 7, 2, 5)$ featured in
  Theorem~\ref{main2} and Section~\ref{sec:binomial}. This configuration is
  self-dual and not multipliable along any choice of triple $t$. Indeed, we
  confirm numerically that the product $a (n) A_{\sigma_8} (n)$ is not a
  leading coefficient $A_{\sigma_{10}} (n)$ for any configuration
  $\sigma_{10}$.
\end{example}

Nevertheless, the next result guarantees that all positive integer powers
of the Ap\'ery numbers $a(n)$ are leading coefficients.

\begin{proposition}
  \label{prop:mult:a}If $\rho = (N - 1, \rho_0, N, \rho_1, \rho_2, \ldots,
  \rho_{N - 3}) \in \Sigma_N$, then
  \begin{equation*}
    A_{\tau} (n) = a (n) A_{\rho} (n),
  \end{equation*}
  where $\tau = (N + 1, \rho_0 + 1, N + 2, N, \rho_{N - 3} + 1, \rho_{N - 4} +
  1, \ldots, \rho_1 + 1, 1)$.
\end{proposition}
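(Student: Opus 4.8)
The plan is to exhibit $[\tau]$ as a Brown star product $[\sigma_5] \star_{s,t} [\rho]$ and then invoke Proposition~\ref{prop:mult:I}. Since the leading coefficients of $[\sigma_5]$ are the Ap\'ery numbers $a(n)$ (as recorded in Example~\ref{eg:mult:s5:s7}), such a factorization immediately gives $A_\tau(n) = A_{\sigma_5}(n)\, A_\rho(n) = a(n)\, A_\rho(n)$, which is the assertion. Thus the entire content of the proposition is combinatorial: it is the verification that the configuration of the explicitly given permutation $\tau$ is precisely the product of $[\sigma_5]$ with $[\rho]$.

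First I would set up the two factors. I represent $[\sigma_5]$ by $(\alpha, \alpha') = ((1,2,3,4,5),(1,3,5,2,4))$ and $[\rho]$ by $(\beta, \beta') = (\operatorname{id}_N, \rho)$, taking the $\sigma_5$-factor to be the one whose two surplus points get appended (so that the new labels become $N+1, N+2$) and the $\rho$-factor to be the natural subset. The form $\rho = (N-1, \rho_0, N, \rho_1, \ldots, \rho_{N-3})$ is tailored precisely so that $t = (N-1, \rho_0, N)$ is a multiplication triple for the dual $(\rho, \operatorname{id}_N)$: the entries $N-1, \rho_0, N$ are consecutive in $\rho$, while $N-1, N$ are consecutive in $\operatorname{id}_N$. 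On the $\sigma_5$-side I take $s = (1,2,3)$, which is multipliable since $1,2,3$ are consecutive in $(1,2,3,4,5)$ and $1,3$ are consecutive in $(1,3,5,2,4)$. Identifying $s$ with $t$ and appending $4,5 \mapsto N+1, N+2$, I carry out the two shuffles explicitly; I expect to obtain $\gamma = (1, 2, \ldots, N-1, N+2, N+1, N)$ and $\gamma' = (N-1, N+1, \rho_0, N+2, N, \rho_1, \ldots, \rho_{N-3})$, each of which one checks restricts correctly to $\alpha,\beta$ and to $\alpha',\beta'$. A useful sanity check is the special case $\rho = \sigma_5$, where the construction reproduces $A_{\sigma_7}(n) = a(n)^2$ of Example~\ref{eg:mult:s5:s7}.

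The final and most delicate step is to identify the product configuration $[\gamma, \gamma']$ with the standard representative $[\operatorname{id}_{N+2}, \tau]$ for the $\tau$ written in the statement. This amounts to exhibiting a relabeling of $\{1, \ldots, N+2\}$ that normalizes $\gamma$ to $(1, 2, \ldots, N+2)$ and, together with the dihedral symmetries available on each structure, carries $\gamma'$ to $\tau$. This is where the somewhat opaque features of the stated $\tau$ originate: the shift $\rho_i \mapsto \rho_i + 1$ and the reversal of the tail $(\rho_1, \ldots, \rho_{N-3})$ are exactly the bookkeeping produced by normalizing $\gamma$ and by reading the $\rho$-strand in the opposite orientation. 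I expect this index-chasing, rather than any conceptual difficulty, to be the main obstacle, and I would discharge it by writing down the normalizing permutation explicitly and checking that the two cyclic sequences agree up to rotation and reflection. Once $[\gamma, \gamma'] = [\tau]$ is established, Proposition~\ref{prop:mult:I} yields $A_\tau(n) = a(n)\, A_\rho(n)$ on the nose, the normalization $A_\sigma(0) = a(0) = 1$ removing any ambiguity of overall constant.
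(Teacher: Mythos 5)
Your proposal is correct and follows essentially the same route as the paper: the same factors $(\alpha,\alpha')=((1,2,3,4,5),(1,3,5,2,4))$ and $(\beta,\beta')=(\operatorname{id}_N,\rho)$, the same triples $s=(1,2,3)$ and $t=(N-1,\rho_0,N)$, the same shuffles yielding $\gamma=(1,2,\ldots,N-1,N+2,N+1,N)$ and $\gamma'=(N-1,N+1,\rho_0,N+2,N,\rho_1,\ldots,\rho_{N-3})$, followed by Proposition~\ref{prop:mult:I}. The only step you leave as a plan---the identification $[\gamma,\gamma']=[\tau]$---is carried out in the paper exactly as you anticipate, by swapping the labels $N$ and $N+2$, cyclically shifting all entries by one (producing the $\rho_i\mapsto\rho_i+1$), and reversing (producing the reversed tail).
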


Note that $\tau$ is of the same shape of $\rho$, so that the result can be
iterated to conclude that $a (n)^{\lambda} A_{\rho} (n)$ is a leading
coefficient for any integer $\lambda \geq 0$.

\begin{proof}
  Let $(\alpha, \alpha') = ((1, 2, 3, 4, 5), (1, 3, 5, 2, 4))$ and $(\beta,
  \beta') = ((1, 2, \ldots, N), \rho)$. Note that $(\alpha, \alpha')$ is
  multipliable along $s = (1, 2, 3)$, and that the dual of $(\beta, \beta')$
  is multipliable along $t = (N - 1, \rho_0, N)$. Let $(\gamma, \gamma') =
  (\alpha, \alpha') \star_{s, t} (\beta, \beta')$.
  
  To compute this product, we proceed as in Example~\ref{eg:mult:s5:ab} and
  replace $(\alpha, \alpha')$ with $(\alpha, \alpha') = ((N - 1, \rho_0, N, N
  + 1, N + 2), (N - 1, N, N + 2, \rho_0, N + 1))$. Then, shuffling $\alpha =
  (N - 1, \rho_0, N, N + 1, N + 2)$ and $\beta = (1, 2, \ldots, N)$ such that
  both dihedral structures are preserved, we obtain
  \begin{equation*}
    \gamma = (1, 2, \ldots, N - 1, N + 2, N + 1, N) .
  \end{equation*}
  Likewise, shuffling $\alpha' = (N - 1, N, N + 2, \rho_0, N + 1)$ and $\beta'
  = (N - 1, \rho_0, N, \rho_1, \rho_2, \ldots, \rho_{N - 3})$ results in
  \begin{equation*}
    \gamma' = (N - 1, N + 1, \rho_0, N + 2, N, \rho_1, \rho_2, \ldots,
     \rho_{N - 3}) .
  \end{equation*}
  Since $[\alpha, \alpha'] = [\sigma_5]$ and $[\beta, \beta'] = [\rho]$,
  Proposition~\ref{prop:mult:I} implies that
  \begin{equation*}
    A_{[\gamma, \gamma']} (n) = a (n) A_{\rho} (n),
  \end{equation*}
  so that it only remains to observe that $[\gamma, \gamma'] = [\tau]$. First,
  swapping $N$ and $N + 2$, we find that
  \begin{equation*}
    [\gamma, \gamma'] = [(N - 1, N + 1, \rho_0, N, N + 2, \rho_1, \rho_2,
     \ldots, \rho_{N - 3})] .
  \end{equation*}
  Finally, as configurations, we have
  \begin{eqnarray*}
    &  & [(N - 1, N + 1, \rho_0, N, N + 2, \rho_1, \rho_2, \ldots, \rho_{N -
    3})]\\
    & = & [(N, N + 2, \rho_0 + 1, N + 1, 1, \rho_1 + 1, \rho_2 + 1, \ldots,
    \rho_{N - 3} + 1)]\\
    & = & [(N + 1, \rho_0 + 1, N + 2, N, \rho_{N - 3} + 1, \rho_{N - 4} + 1,
    \ldots, \rho_1 + 1, 1)],
  \end{eqnarray*}
  which is $[\tau]$, as claimed.
\end{proof}

\begin{example} \label{powers}
  Let us illustrate with $\rho = (4, 2, 5, 3, 1)$. Since $A_{\rho} (n) = a
  (n)$, Proposition~\ref{prop:mult:a} implies that
  \begin{equation*}
    A_{\sigma_7} (n) = a (n)^2, \quad \sigma_7 = (6, 3, 7, 5, 2, 4, 1) .
  \end{equation*}
  Note that this configuration agrees with the one obtained in
  Example~\ref{eg:mult:s5:s7}. Iterating Proposition~\ref{prop:mult:a}, we
  find
  \begin{eqnarray*}
    A_{\sigma_9} (n) = a (n)^3, & \quad & \sigma_9 = (8, 4, 9, 7, 2, 5, 3, 6,
    1),\\
    A_{\sigma_{11}} (n) = a (n)^4, &  & \sigma_{11} = (10, 5, 11, 9, 2, 7, 4,
    6, 3, 8, 1) .
  \end{eqnarray*}
\end{example}

In fact, the family of configurations in Example \ref{powers} can be made
explicit as follows. For a positive integer $M$ and configuration $(a_1, a_2, \ldots, a_{N})$, we write
$M + (a_1, a_2, \ldots, a_N)$ for $(M + a_1, M + a_2, \ldots, M + a_N)$. 

\begin{corollary}
  \label{cor:mult:a}Let $M \geq 2$ be an integer. Then
  \begin{equation*}
    A_{\sigma_{2 M + 1}} (n) = a (n)^{M - 1},
  \end{equation*}
  where the configuration $\sigma_{2 M + 1}$ is
  \begin{equation*}
    \sigma_{2 M + 1} = M + (M, 0, M + 1, M - 1, - (M - 2), M - 3, \ldots, \pm
     1, \mp 1, \pm 2, \ldots, - (M - 1)).
  \end{equation*}
  \end{corollary}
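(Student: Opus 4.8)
The plan is to argue by induction on $M$, with Proposition~\ref{prop:mult:a} supplying the inductive step. It is convenient to write $\sigma_{2M+1} = M + c_M$, where $c_M = (M, 0, M+1, \mu_M)$ is the inner sequence of the statement and $\mu_M = (M-1, -(M-2), M-3, \ldots, \mp 1, \pm 1, \ldots, -(M-1))$ is its alternating ``middle block'' of length $2(M-1)$. For the base case $M = 2$ one has $c_2 = (2, 0, 3, 1, -1)$, so $\sigma_5 = (4, 2, 5, 3, 1)$; this is exactly the configuration $\rho$ of Example~\ref{powers}, for which $A_{\sigma_5}(n) = a(n) = a(n)^{M-1}$.

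For the inductive step I would first note that $\sigma_{2M+1}$ already has the shape required by Proposition~\ref{prop:mult:a}: taking $N = 2M+1$, its first entry is $2M = N-1$, its third entry is $2M+1 = N$, and $\rho_0 = M$ with $(\rho_1, \ldots, \rho_{N-3}) = M + \mu_M$. The proposition then gives $A_\tau(n) = a(n)\, A_{\sigma_{2M+1}}(n) = a(n)^{M}$ for the explicit permutation $\tau$ it produces, so it remains only to identify $\tau$ with $\sigma_{2(M+1)+1} = \sigma_{2M+3}$. Reading off the formula for $\tau$ and subtracting the global shift by $M+1$, its first three entries become $(M+1, 0, M+2)$ and its tail becomes $(M, \operatorname{rev}(\mu_M), -M)$, where $\operatorname{rev}$ denotes reversal; here the boundary entries arise from $2M+1 = (M+1)+M$ and $1 = (M+1)-M$, and the reversal from the fact that $\tau$ lists $\rho_{N-3}+1, \ldots, \rho_1+1$ in reverse order. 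Thus the entire step reduces to the single combinatorial identity
\[ \mu_{M+1} = (M, \operatorname{rev}(\mu_M), -M), \]
after which $\tau = (M+1) + (M+1, 0, M+2, \mu_{M+1}) = \sigma_{2M+3}$, completing the induction.

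The one substantive point---and the step I expect to require the most care---is verifying this recursion for the middle blocks. The key observations are that the absolute values of $\mu_M$ read $M-1, M-2, \ldots, 1, 1, \ldots, M-1$ and hence form a palindrome, so that reversal leaves them unchanged, while the signs of $\mu_M$ strictly alternate beginning with $+$. Reversing an alternating sequence of even length $2(M-1)$ turns its sign pattern into one beginning with $-$, which dovetails precisely with the prepended $+M$; appending $-M$ then preserves the alternation through to the end. Combining the palindrome property of the absolute values with this sign bookkeeping shows that $(M, \operatorname{rev}(\mu_M), -M)$ is exactly the alternating block whose absolute values are $M, M-1, \ldots, 1, 1, \ldots, M-1, M$, namely $\mu_{M+1}$. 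I would double-check the sign at the turning point $\pm 1 \mapsto \mp 1$ to be sure the alternation is not broken there, as this is the only place where an off-by-one in parity could creep in.
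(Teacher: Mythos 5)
Your proof is correct and takes essentially the same route as the paper: induction on $M$ with base case $\sigma_5 = (4,2,5,3,1)$ and the inductive step supplied by Proposition~\ref{prop:mult:a}; the paper's proof is just two sentences and leaves implicit the identification $[\tau]=[\sigma_{2M+3}]$, which you verify explicitly via the middle-block recursion $\mu_{M+1} = (M, \operatorname{rev}(\mu_M), -M)$. Your sign concern at the turning point is unfounded: since $\mu_M$ has even length $2(M-1)$, strictly alternating signs and palindromic absolute values, one in fact has $\operatorname{rev}(\mu_M) = -\mu_M$, so the alternation in $(M, \operatorname{rev}(\mu_M), -M)$ is unbroken throughout.
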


\begin{proof}
  The statement is clearly true for $M = 2$, in which case $\sigma_5 = (4, 2,
  5, 3, 1)$. The claim then follows from Proposition~\ref{prop:mult:a} by
  induction.
\end{proof}

\subsection{Modular forms with complex multiplication and Hecke characters}\label{sec_Prelim_CM}
In this section, we recall some properties of modular forms with complex multiplication and Hecke characters. For more details, see \cite{Ri}.

Suppose $\psi$ is a nontrivial real Dirichlet character with corresponding quadratic field $K$. A newform $f(z)= \sum_{n=1}^{\infty} \gamma(n) \, q^n$, where $q \assign e^{2 \pi i z}$, has complex multiplication (CM) by $\psi$, or by $K$, if $\gamma(p) = \psi(p) \, \gamma(p)$ for all primes $p$ in a set of density one. 

By the work of Hecke and Shimura we can construct CM newforms using Hecke characters. Let $K=\mathbb{Q}(\sqrt{-d})$ be an imaginary quadratic field with discriminant $D$, and let $\mathcal{O}_K$ be its ring of integers. For an ideal $\mathfrak{f} \in \mathcal{O}_K$, let $I(\mathfrak{f})$ denote the group of fractional ideals prime to $\mathfrak{f}$. A Hecke character of weight $k$ and modulo $\mathfrak{f}$ is a homomorphism
$\displaystyle
\Phi : I(\mathfrak{f}) \to \mathbb{C}^{*}$,
satisfying
$\displaystyle
\Phi(\alpha \, \mathcal{O}_K)= \alpha^{k-1} 
$
when
$
\alpha \equiv^{\times} 1 \pmod {\mathfrak{f}}.
$
Let $N(\mathfrak{a})$ denote the norm of the ideal $\mathfrak{a}$. Then,
\begin{align*}
  f(z) \assign \sum_{\mathfrak{a}} \Phi(\mathfrak{a}) q^{N(\mathfrak{a})} = \sum_{n=1}^{\infty} \gamma(n) q^n,
\end{align*}
where the sum is over all ideals $\mathfrak{a}$ in $\mathcal{O}_K$ prime to $\mathfrak{f}$, is a Hecke eigenform of weight $k$ on ${\Gamma_0(|D| \cdot N(\mathfrak{f}))}$ with Nebentypus 
$\chi(n)=\left(\frac{D}{n}\right) \frac{\Phi(n \, \mathcal{O}_K)}{n^{k-1}}$. Here, $\left( \tfrac{a}{n}\right)$ is the Kronecker symbol.
Furthermore, $f$ has CM by $K$. We call $\mathfrak{f}$ the conductor of $\Phi$ if $\mathfrak{f}$ is minimal, i.e., if $\Phi$ is defined modulo $\mathfrak{f}^{\prime}$ then $\mathfrak{f} \mid \mathfrak{f}^{\prime}$. If $\mathfrak{f}$ is the conductor of $\Phi$ then $f(z)$ is a newform. From \cite{Ri}, we also know that every CM newform comes from a Hecke character in this way. 

We will see that Theorem \ref{main1} is a consequence of Corollary \ref{cor:mult:a} and Corollary \ref{cor_AperyModCoeffs}. 

\begin{theorem}\label{thm_CM}
Let $k \geq 2$ be a positive integer. Then there exists a weight $k$ CM newform
$$
f_k(z) =: \sum_{n=1}^{\infty} \gamma_k(n) q^n
\in
\begin{cases}
S_k(\Gamma_0(32)), & \text{if $k$ is even},\\
S_k(\Gamma_0(4),(\frac{-4}{\cdot})), & \text{if $k \equiv 1 \imod 4$},\\
S_k(\Gamma_0(16),(\frac{-4}{\cdot})), & \text{if $k \equiv 3 \imod 4$},
\end{cases}
$$
such that, for any odd prime $p$,
$$
\gamma_k(p)
=
\begin{cases}
(-1)^{\frac{(x+y-1)(k-1)}{2}} \left[(x+iy)^{k-1} + (x-iy)^{k-1} \right], & \text{if $p \equiv 1 \imod 4$, $p= x^2+y^2$, $x$ odd},\\
0, & \text{if $p \equiv 3 \imod 4$}.
\end{cases}
$$
\end{theorem}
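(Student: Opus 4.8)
The plan is to construct the CM newform explicitly via a Hecke character on the Gaussian field $K = \mathbb{Q}(i)$, as set up in Section~\ref{sec_Prelim_CM}, and then read off its Fourier coefficients. Since $\mathcal{O}_K = \mathbb{Z}[i]$ has class number one and unit group $\{\pm 1, \pm i\}$, every ideal prime to a suitable conductor $\mathfrak{f}$ has a unique generator in a fixed residue class, which makes the associated Hecke character completely explicit. The guiding idea is that for a weight $k$ form the character should behave like $\Phi(\alpha \mathcal{O}_K) = \alpha^{k-1}$ up to a finite-order twist chosen to make $\Phi$ well-defined modulo units and to land in the correct space with the correct nebentypus. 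I would take $\mathfrak{f} = (2)$ or $(2+2i)$ as appropriate, so that $|D| \cdot N(\mathfrak{f}) = 4 \cdot N(\mathfrak{f})$ matches the claimed levels $4$, $16$, or $32$ according to the residue of $k$ modulo $4$.

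First I would define the Hecke character precisely. For a prime $p \equiv 1 \imod 4$, write $p = x^2 + y^2$ with $x$ odd, so that $p = \pi \bar{\pi}$ with $\pi = x + iy$ a Gaussian prime of norm $p$; the four associates $\pm\pi, \pm i\pi$ account for the unit ambiguity. For $p \equiv 3 \imod 4$, the prime $p$ is inert in $K$, so the only ideal of norm $p$ does not arise as $N(\mathfrak{a})$ for a degree-one prime $\mathfrak{a}$, forcing $\gamma_k(p) = 0$; this handles the second case immediately. The substantive case is $p \equiv 1 \imod 4$, where by the Hecke construction recalled in the excerpt, $\gamma_k(p) = \Phi(\mathfrak{p}) + \Phi(\bar{\mathfrak{p}})$ summed over the two primes above $p$. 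The task is then to choose the finite-order normalizing factor so that $\Phi(\mathfrak{p}) = (-1)^{\frac{(x+y-1)(k-1)}{2}} (x+iy)^{k-1}$ for the canonical generator, and to check this is independent of the choice among the four associates.

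The key step is verifying well-definedness modulo the units: replacing $\pi$ by $i\pi$ multiplies $\pi^{k-1}$ by $i^{k-1}$ and simultaneously changes the sign factor through the parity of $(x+y-1)$, since $x+iy \mapsto i(x+iy) = -y + ix$ swaps the roles of $x$ and $y$ (with a sign), altering $x + y - 1$ accordingly. I would check that these two changes cancel, so that the product $(-1)^{\frac{(x+y-1)(k-1)}{2}} (x+iy)^{k-1}$ depends only on the ideal $(x+iy)$ and not on its chosen generator among associates; this is precisely the condition $\Phi(\alpha\mathcal{O}_K) = \alpha^{k-1}$ for $\alpha \equiv^{\times} 1 \pmod{\mathfrak{f}}$, with $\mathfrak{f}$ chosen to pin down a residue class representative. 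The congruence condition $x$ odd (equivalently $y$ even) is exactly what singles out the canonical generator modulo $(2)$, and I expect this is where the three different levels and nebentypus characters come from: the parity of $k$ governs whether $\Phi$ is already defined modulo $(1+i)$ or requires the full modulus $(2)$ or $(2+2i)$.

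The main obstacle will be the bookkeeping of the sign factor $(-1)^{\frac{(x+y-1)(k-1)}{2}}$ and confirming it is consistent across all four associates of $\pi$ as well as across the conjugate prime $\bar{\pi}$. Writing out $(x+iy)^{k-1} + (x-iy)^{k-1}$ shows the sum is real, as it must be for a Fourier coefficient, but the sign normalization must be compatible with conjugation $i \mapsto -i$, which sends $(x+y-1)$ to $(x-y-1)$; since $y$ is even this changes the exponent by an even multiple of $(k-1)$, so the sign is preserved, and I would confirm this parity computation carefully. Once the character is shown to be well-defined with conductor exactly $\mathfrak{f}$, the cited theorem of Hecke and Shimura from \cite{Ri} immediately yields that $f_k$ is a newform of the stated weight, level, and nebentypus with CM by $K$, and the coefficient formula follows directly from $\gamma_k(p) = \Phi(\mathfrak{p}) + \Phi(\bar{\mathfrak{p}})$. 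The remaining routine verification is matching $|D| N(\mathfrak{f}) = 4 N(\mathfrak{f})$ to the levels $4, 16, 32$ in the three congruence classes of $k$.
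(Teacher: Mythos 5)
Your proposal takes essentially the same route as the paper: an explicit Hecke character on $\mathbb{Q}(i)$ of conductor $(1)$, $(2)$, or $(2+2i)$ according to $k$ modulo $4$ (yielding levels $|D|\cdot N(\mathfrak{f}) = 4, 16, 32$), with a finite-order twist fixed so that the value is independent of the choice among the four associates of a generator, followed by the Hecke--Shimura construction from \cite{Ri} and the evaluation $\gamma_k(p)=\Phi(\mathfrak{p})+\Phi(\bar{\mathfrak{p}})$ for split $p$ and $\gamma_k(p)=0$ for inert $p$. The paper simply carries out the unit and conductor bookkeeping you outline, defining the twist explicitly in each congruence class: trivial twist with $\mathfrak{f}=(1)$ when $k\equiv 1 \pmod 4$ (since $i^{k-1}=1$), a quadratic twist when $k\equiv 3 \pmod 4$, and a twist valued in $\{\pm 1,\pm i\}$ together with an extra factor $(-1)^{\frac{m-1}{2}}$ on fractional ideals $\tfrac{1}{m}(\alpha)$ when $k$ is even.
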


\begin{proof}
For $k$ in each equivalence class modulo $4$, we will define a Hecke character $\Psi_k$ and construct the required CM newform $f_k$, using the methodology outlined above.

For an ideal $\mathfrak{f} \in \mathcal{O}_K$, let $I(\mathfrak{f})$ denote the group of fractional ideals prime to $\mathfrak{f}$, and let $J(\mathfrak{f})$ be the subset of principal fractional ideals whose generator is multiplicatively congruent to $1$ modulo $\mathfrak{f}$, i.e., $J(\mathfrak{f})=\{ (\alpha) \in I(\mathfrak{f}) \mid \alpha \equiv^{\times} 1 \pmod {\mathfrak{f}} \}$.

Let $K=\mathbb{Q}(\sqrt{-1})$ which has discriminant $D=-4$ and whose ring of integers is $\mathcal{O}_K = \mathbb{Z}[\sqrt{-1}]$, which is a principal ideal domain. Therefore, all fractional ideals of $K$ are also principal, and are of the form $\frac{1}{m} (\alpha)$ where $m \in \mathbb{Z} \setminus \{0\}$ and $\alpha \in \mathcal{O}_K$.

Case 1: $k \equiv 1 \pmod 4$. Let $\mathfrak{f}= (1)$. Then $I(\mathfrak{f})=J(\mathfrak{f})$ is the set of all fractional ideals. We define the Hecke character $\Phi_k : I((1)) \to \mathbb{C}^{*}$ of weight $k$ and conductor $(1)$ by
$$
\Phi_k \left( \tfrac{1}{m} (\alpha) \right)
=
\left( \tfrac{\alpha}{m}  \right)^{k-1}.
$$
Therefore, 
$$
f_k(z) \assign \sum_{\mathfrak{a}} \Phi_k(\mathfrak{a}) q^{N(\mathfrak{a})} = \sum_{n=1}^{\infty} \gamma_k(n) q^n \in S_k(\Gamma_0(4),(\tfrac{-4}{\cdot}))
$$
is a CM newform and, for $p$ an odd prime, 
$$
\gamma_k(p) = 
\begin{cases}
(x+iy)^{k-1} +  (x-iy)^{k-1}, & \text{if $p \equiv 1 \imod 4$, $p= x^2+y^2$, $x$ odd},\\
0, & \text{if $p \equiv 3 \imod 4$}.
\end{cases}
$$

\noindent
Case 2: $k \equiv 3 \pmod 4$. Let $\mathfrak{f}= (2)$. Then
$$
I((2)) = \{ \tfrac{1}{m} (\alpha) \mid m \in \mathbb{Z} \text{ odd}, \alpha=x+iy \in \mathbb{Z}[\sqrt{-1}], \text{$x$ and $y$ different parity} \}
$$
and
$$
J((2)) = \{ \tfrac{1}{m} (\alpha) \mid m \in \mathbb{Z} \text{ odd}, \alpha=x+iy \in \mathbb{Z}[\sqrt{-1}], \text{$x$ odd, $y$ even} \}.
$$

We define the Hecke character $\Phi_k : I((2)) \to \mathbb{C}^{*}$ of weight $k$ and conductor $(2)$ by
$$
\Phi_k \left( \tfrac{1}{m} (x+iy) \right)
=
\left( \tfrac{x+iy}{m}  \right)^{k-1} \cdot (-1)^y. 
$$
Therefore, 
$$
f_k(z) \assign \sum_{\mathfrak{a}} \Phi_k(\mathfrak{a}) q^{N(\mathfrak{a})} = \sum_{n=1}^{\infty} \gamma_k(n) q^n \in S_k(\Gamma_0(16),(\tfrac{-4}{\cdot}))
$$
is a CM newform, and for $p$ an odd prime, 
$$
\gamma_k(p) = 
\begin{cases}
(x+iy)^{k-1} +  (x-iy)^{k-1},  & \text{if $p \equiv 1 \imod 4$, $p= x^2+y^2$, $x$ odd},\\
0, & \text{if $p \equiv 3 \imod 4$}.
\end{cases}
$$

\noindent
Case 3: $k$ even. Let $\mathfrak{f}= (2+2i)$. Then
$$
I((2+2i)) = \{ \tfrac{1}{m} (\alpha) \mid \text{$m \in \mathbb{Z}$ odd, $\alpha=x+iy \in \mathbb{Z}[\sqrt{-1}]$, $x$ and $y$ different parity} \}
$$
and
\begin{multline*}
  J((2+2i)) = \{ \tfrac{1}{m} (\alpha) \mid \text{$m \in \mathbb{Z}$ odd, $\alpha=x+iy \in \mathbb{Z}[\sqrt{-1}]$, $x$ odd, $y$ even, such that}\\ 
  \text{ if $m \equiv x \imod{4}$ then $y \equiv 0 \imod{4}$, otherwise $y \equiv 2 \imod{4}$} \}.
\end{multline*}
We define the Hecke character $\Phi_k : I((2+2i)) \to \mathbb{C}^{*}$ of weight $k$ and conductor $(2+2i)$ by
$$
\Phi_k \left( \tfrac{1}{m} (x+iy) \right)
=
\left( \tfrac{x+iy}{m}  \right)^{k-1} \chi(x+iy)^{k-1}  \cdot (-1)^y \cdot (-1)^{\frac{m-1}{2}},
$$
where
$$
\chi(x+iy)=
(-1)^{\frac{x+y-1}{2}} \cdot \begin{cases}
1, & \text{if $x$ odd, $y$ even},\\
i, & \text{if $x$ even, $y$ odd}.
\end{cases} 
$$
Therefore, 
$$
f_k(z) \assign \sum_{\mathfrak{a}} \Phi_k(\mathfrak{a}) q^{N(\mathfrak{a})} = \sum_{n=1}^{\infty} \gamma_k(n) q^n \in S_k(\Gamma_0(32))
$$
is a CM newform and, for $p$ an odd prime, 
$$
\gamma_k(p) = 
\begin{cases}
(-1)^{\frac{x+y-1}{2}} \left[ (x+iy)^{k-1} +  (x-iy)^{k-1}  \right], & \text{if $p \equiv 1 \imod 4$, $p= x^2+y^2$, $x$ odd},\\
0, & \text{if $p \equiv 3 \imod 4$}.
\end{cases}
$$
\end{proof}

\begin{remark} When $k$ is odd, an alternative expression for the CM newform $f_{k}(z)$ constructed in Theorem \ref{thm_CM}
is given by the binary theta series
\begin{equation}
f_k (z) = \frac{1}{4}  \sum_{(n, m) \in \mathbb{Z}^2} (- 1)^{m (k - 1) /
  2} (n - i m)^{k - 1} q^{n^2 + m^2}. \label{eq:fk}
\end{equation}
The formulation \eqref{eq:fk} was recently used to relate the $L$-series of $f_k$ at $k-1$ to an interpolated version of the leading coefficients in Corollary \ref{cor:mult:a}. For further details, see \cite{osL}.
\end{remark}

\begin{corollary}\label{cor_Coeffs}
Let $k \geq 2$ be a positive integer and let
$f_k(z) =: \sum_{n=1}^{\infty} \gamma_k(n) q^n$
be the weight $k$ CM newform described in Theorem~\ref{thm_CM}. Then, for any odd prime $p$ and integer $m \geq 1$,
\begin{align*}
\gamma_k(p)^m &= \sum_{t=0}^{\lfloor \frac{m-1}{2} \rfloor} \binom{m}{t} p^{t(k-1)} \gamma_{(m-2t)(k-1)+1}(p)
+ 
\begin{cases}
\displaystyle \binom{m}{\frac{m}{2}} p^{\frac{m}{2}(k-1)}, & \text{if $p \equiv 1 \imod 4$ and $m$ even},\\
0, & \text{otherwise}.
\end{cases}
\end{align*}
\end{corollary}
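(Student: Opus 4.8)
The plan is to collapse the case-by-case description of $\gamma_k(p)$ in Theorem~\ref{thm_CM} into a single uniform closed form and then expand a binomial power. First I would absorb the weight-dependent sign into the Gaussian integer. Fix an odd prime $p \equiv 1 \pmod 4$ and write $p = x^2 + y^2$ with $x$ odd; then $y$ is necessarily even, so $x+y-1$ is even, $(x+y-1)/2 \in \mathbb{Z}$, and $\epsilon \assign (-1)^{(x+y-1)/2}$ is a sign depending only on $p$ (one checks it is independent of the sign of $y$ since $y$ is even). Setting $u \assign \epsilon(x+iy)$ and $\bar u \assign \epsilon(x-iy)$, one has $u \bar u = \epsilon^2 (x^2+y^2) = p$, and since $(-1)^{(x+y-1)(s-1)/2} = \epsilon^{s-1}$, the formula in Theorem~\ref{thm_CM} rewrites uniformly as
\begin{equation*}
  \gamma_s(p) = \epsilon^{s-1}\bigl[(x+iy)^{s-1} + (x-iy)^{s-1}\bigr] = u^{s-1} + \bar u^{s-1}
\end{equation*}
for every weight $s \geq 2$. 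The decisive feature is that $u$ and $\bar u$ do not depend on $s$, so the very same pair represents all the coefficients $\gamma_{(m-2t)(k-1)+1}(p)$ that appear on the right-hand side.

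With this in hand the computation is the binomial theorem. Writing $w \assign k-1$,
\begin{equation*}
  \gamma_k(p)^m = (u^{w} + \bar u^{w})^m = \sum_{j=0}^m \binom{m}{j} u^{wj} \bar u^{w(m-j)},
\end{equation*}
and I would pair the index $j$ with $m-j$. For $j < m/2$ the two terms combine, using $u\bar u = p$, into
\begin{equation*}
  \binom{m}{j} (u\bar u)^{wj}\bigl(u^{w(m-2j)} + \bar u^{w(m-2j)}\bigr) = \binom{m}{j} p^{j(k-1)}\, \gamma_{(m-2j)(k-1)+1}(p),
\end{equation*}
which are exactly the summands in the stated formula (with $t=j$, the range $0 \leq t \leq \lfloor (m-1)/2\rfloor$ being precisely the condition $j < m/2$). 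When $m$ is even the leftover central term $j=m/2$ equals $\binom{m}{m/2}(u\bar u)^{wm/2} = \binom{m}{m/2}\, p^{(m/2)(k-1)}$, producing the extra term; when $m$ is odd there is no central term, matching the ``$0$, otherwise'' case.

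The case $p \equiv 3 \pmod 4$ is immediate: Theorem~\ref{thm_CM} gives $\gamma_s(p) = 0$ for every weight $s$, so $\gamma_k(p)^m = 0$ on the left while each $\gamma_{(m-2t)(k-1)+1}(p)$ vanishes on the right, and the extra term is present only when $p \equiv 1 \pmod 4$. I do not anticipate a genuine obstacle; the one step demanding care is the opening reduction, namely verifying that the sign in Theorem~\ref{thm_CM} is exactly $\epsilon^{s-1}$ so that a single Gaussian integer $u$ serves all weights at once, which hinges on $(x+y-1)/2$ being an integer. One should also note that every weight $(m-2t)(k-1)+1$ arising is at least $2$, since $t \leq \lfloor (m-1)/2\rfloor$ forces $m-2t \geq 1$ and $k \geq 2$ gives $k-1 \geq 1$, so Theorem~\ref{thm_CM} legitimately applies to each coefficient on the right.
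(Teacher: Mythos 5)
Your proposal is correct and takes essentially the same approach as the paper: your $u$ and $\bar{u}$ are exactly the paper's $\gamma^{+}(p) = (-1)^{(x+y-1)/2}(x+iy)$ and $\gamma^{-}(p) = (-1)^{(x+y-1)/2}(x-iy)$, and the binomial expansion with pairing of $t$ and $m-t$, the central term for even $m$ via $u\bar{u}=p$, and the trivial $p \equiv 3 \pmod 4$ case all coincide with the paper's argument. The only difference is cosmetic: you spell out the uniformity of the sign $\epsilon^{s-1}$ across weights and the check that every weight $(m-2t)(k-1)+1 \geq 2$, which the paper leaves implicit.
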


\begin{proof}
This follows from a simple application of the binomial theorem on the expression for $\gamma_k(p)$ given in Theorem \ref{thm_CM}, while noting that 
$(-1)^{\frac{x+y-1}{2}} (x+iy) \cdot (-1)^{\frac{x+y-1}{2}} (x-iy) = p.$
\end{proof}

\begin{corollary}\label{cor_AperyModCoeffs}
Let $l \geq 1$ be a positive integer and define $k:=2l+1$. Let
$f_k(z) =: \sum_{n=1}^{\infty} \gamma_k(n) q^n$
be the weight $k$ CM newform described in Theorem~\ref{thm_CM}.
Let $a(n) = \sum_{j=0}^{n} \binom{n}{j}^2 \binom{n+j}{j}$ be the {\it Ap{\'e}ry numbers} introduced in~(\ref{AperyNumbers}).
Then, for primes $p \geq 5$,
$$
a(\tfrac{p-1}{2})^l \equiv \gamma_{k}(p) \pmod{p^2}.
$$   
\end{corollary}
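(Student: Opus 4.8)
The plan is to reduce the assertion to the classical case $l=1$ together with the purely algebraic identity among Fourier coefficients recorded in Corollary~\ref{cor_Coeffs}. Set $k=2l+1$, so that $k\equiv 3 \pmod 4$ and $f_3(z)=\eta^6(4z)$ is the unique newform in $S_3(\Gamma_0(16),(\tfrac{-4}{\cdot}))$; thus $\gamma_3=\alpha$ in the notation of the introduction. First I would apply Corollary~\ref{cor_Coeffs} with $k$ replaced by $3$ and $m$ replaced by $l$. Since $k-1=2$ in this case, every term on the right-hand side other than the leading term $t=0$ carries a factor $p^{2t}$ with $t\geq 1$, while the extra term occurring when $l$ is even carries a factor $p^{l}$ with $l\geq 2$. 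All of these are divisible by $p^2$, so
\begin{equation*}
  \gamma_3(p)^l \equiv \gamma_{2l+1}(p) \pmod{p^2}
\end{equation*}
for every odd prime $p$.

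It therefore suffices to prove that $a(\tfrac{p-1}{2})^l \equiv \gamma_3(p)^l \pmod{p^2}$, and this I would obtain from the base case $l=1$, namely
\begin{equation*}
  a\Bigl(\tfrac{p-1}{2}\Bigr) \equiv \gamma_3(p) \pmod{p^2} \qquad (p\geq 5),
\end{equation*}
which is exactly Ahlgren's supercongruence \eqref{superapery1} since $\gamma_3=\alpha$. Indeed, writing $a(\tfrac{p-1}{2}) = \gamma_3(p) + p^2 c$ with $c\in\mathbb{Z}$ and expanding $\bigl(\gamma_3(p)+p^2 c\bigr)^l$ by the binomial theorem, every cross term is divisible by $p^2$, so $a(\tfrac{p-1}{2})^l \equiv \gamma_3(p)^l \pmod{p^2}$. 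Combining the two displays yields $a(\tfrac{p-1}{2})^l \equiv \gamma_{2l+1}(p) \pmod{p^2}$, as claimed. The hypothesis $p\geq 5$ is needed only for the base case, and the case $p\equiv 3\pmod 4$, where both sides vanish, is likewise covered by it.

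Should one prefer a self-contained argument that avoids quoting Ahlgren, the base case can be reproven in the spirit of the preliminary lemmas: one rewrites $a(\tfrac{p-1}{2})$ as a finite field hypergeometric value of Greene, passing from binomial coefficients to rising factorials as in \eqref{eq:Bin_to_Poch} and \eqref{eq:bin2:p} (now carried to the second $p$-adic order), uses \eqref{p3} and Lemma~\ref{cor_2powerp2} to control the relevant power of $2$, and then identifies that value with $\gamma_3(p)$ through the explicit complex multiplication description of Theorem~\ref{thm_CM} and the Teichm\"uller expansion \eqref{for_Teich}.

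The main obstacle is concentrated entirely in the base case. The reduction through Corollary~\ref{cor_Coeffs} and the passage to $l$-th powers are formal, but lifting the relation between the Ap\'ery sum and $\gamma_3(p)$ from a congruence modulo $p$ to one modulo $p^2$ is delicate: the single-variable congruences recalled above only determine the sum modulo $p$, and extracting the second $p$-adic digit requires the deeper input of Ahlgren's argument (the associated $K3$-surface and finite field hypergeometric series) or an equivalent $p$-adic Gamma-function analysis.
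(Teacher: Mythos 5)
Your proof is correct and coincides with the paper's own argument, which likewise deduces $a(\tfrac{p-1}{2})^l \equiv \gamma_3(p)^l \pmod{p^2}$ from Ahlgren's congruence \eqref{superapery1} and then applies Corollary~\ref{cor_Coeffs} with $k=3$ and $m=l$, observing that every term with $t \geq 1$ carries a factor $p^{2t}$ and the extra term for even $l \geq 2$ carries $p^l$, all divisible by $p^2$, leaving exactly $\gamma_{2l+1}(p)=\gamma_k(p)$. One harmless slip: $k=2l+1$ is odd but not necessarily $\equiv 3 \pmod 4$ (take $l=2$), though this does not affect your argument, since all you actually use is that the weight-$3$ form of Theorem~\ref{thm_CM} is $\eta^6(4z)$, i.e.\ $\gamma_3=\alpha$, which is valid.
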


\begin{proof}
By (\ref{superapery1}) and Corollary \ref{cor_Coeffs},
\begin{equation*}
a(\tfrac{p-1}{2})^l 
\equiv \gamma_3(p)^l
\equiv \gamma_{k}(p) \pmod{p^2}.
\end{equation*}
\end{proof}

We can now prove Theorem \ref{main1}.

\begin{proof}[Proof of Theorem \ref{main1}]
Let $l \geq 1$ be a positive integer and define $k=2l+1$ and $N=2l+3$. Consider $f_{k}(z) =: \sum_{n=1}^{\infty} \gamma_{k}(n) q^n$ as in Theorem \ref{thm_CM}. By Corollary \ref{cor:mult:a}, there exists a convergent $\sigma_{N}$ whose leading coefficient $A_{\sigma_N}(n)$ satisfies
\begin{equation} \label{atol}
A_{\sigma_N}(n) = a(n)^{l}.
\end{equation}
Thus, the result then follows from (\ref{atol}) and Corollary \ref{cor_AperyModCoeffs}.
\end{proof}

\section{Multiple binomial sums}\label{sec:binomial}

\subsection{Proving multiple binomial sum representations for $A_{\sigma} (n)$}

In this section, we outline how to obtain multiple binomial sum representations
for the leading coefficients $A_{\sigma} (n)$. Our discussion applies to any
configuration, but we proceed with the configuration $\sigma_8 = (8, 3, 6, 1,
4, 7, 2, 5)$ for $N = 8$ which is relevant to Theorem~\ref{main2}. This is the
self-dual configuration $_8 \pi_6$ in Brown's notation
\cite[Section~10.1.4]{brown-apery}. The leading coefficients $A_{\sigma_8}
(n)$ have initial terms
\begin{equation*}
  1, 33, 8929, 4124193, 2435948001, 1657775448033, \ldots
\end{equation*}
and the following particularly symmetric binomial sum representation.

\begin{proposition}
  \label{prop:A86}For the configuration $\sigma_8 = (8, 3, 6, 1, 4, 7, 2, 5)$,
  the leading coefficients are
  \begin{equation}
    A_{\sigma_8} (n) = \sum_{\substack{
      k_1, k_2, k_3, k_4 = 0 \\
      k_1 + k_2 = k_3 + k_4
    }}^{n} \prod_{i = 1}^4 \binom{n}{k_i} \binom{n + k_i}{k_i} .
    \label{eq:A86}
  \end{equation}
\end{proposition}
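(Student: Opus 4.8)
The plan is to evaluate the cellular integral $I_{\sigma_8}(n)$ directly from the definitions and then isolate the coefficient of the top-weight multiple zeta value $\zeta_5$. First I would make $f_{\sigma_8}$ and $\omega_{\sigma_8}$ explicit: starting from \eqref{eq:I:f} with $N=8$, $z_1=0$ and $z_7=1$, I would delete the two factors containing $z_8=\infty$ from the numerator and denominator, and then substitute $z_{i+1}\mapsto t_i$ for $i=1,\dots,5$. This turns $I_{\sigma_8}(n)$ into an explicit $5$-fold integral of a power of a rational function against a logarithmic form over the simplex $S_8=\{0<t_1<\cdots<t_5<1\}$. Because $\sigma_8$ is self-dual, the resulting integrand should already exhibit the symmetry that is ultimately reflected in \eqref{eq:A86}.

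Next, to isolate the leading coefficient, I would use that the coefficient of $\zeta_5=I_{\sigma_8}(0)$ is a weight-$0$ (rational) quantity, so in an iterated-integral evaluation of $I_{\sigma_8}(n)$ it is fed only by the contributions that raise the weight of the accompanying MZV at \emph{every} one of the five integration stages. Concretely, I would expand the rational factors of $f_{\sigma_8}^n\omega_{\sigma_8}$ as binomial/geometric series in the $t_i$, integrate the resulting monomials over $S_8$ stage by stage via Beta-type integrals $\int_0^1 t^a(1-t)^b\,\tfrac{dt}{t}$ and their iterates, and retain at each stage only the term that contributes to the top weight. Carried through all five variables, the summation indices introduced by the series expansions should reduce, after the closed-form Beta integrations, to four indices $k_1,\dots,k_4$ subject to a single linear relation arising from the one genuine convolution in the integration.

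I would then identify this multiple sum with the right-hand side of \eqref{eq:A86}: after collecting the expanded binomial coefficients, the constraint should take the form $k_1+k_2=k_3+k_4$ and each index should contribute a factor $\binom{n}{k_i}\binom{n+k_i}{k_i}$. A clean way to package and verify the final identity is the constant-term reformulation: writing $D_n(x)\assign\sum_{k=0}^{n}\binom{n}{k}\binom{n+k}{k}x^k$, the right-hand side of \eqref{eq:A86} equals the constant term $[x^0]\,D_n(x)^2\,D_n(x^{-1})^2$, and I would check that the integral extraction naturally yields exactly this constant term, with two indices carrying $x$ and two carrying $x^{-1}$, consistent with the self-duality of $\sigma_8$. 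As cross-checks I would match initial values against the listed terms $1,33,8929,\dots$ and, if one can independently derive the Picard--Fuchs recurrence for $I_{\sigma_8}(n)$, verify that both sides satisfy the same Ap\'ery-like recurrence by creative telescoping.

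The main obstacle will be the top-weight extraction in the second step: justifying rigorously the recipe that only the weight-raising terms feed the coefficient of $\zeta_5$, controlling the signs introduced by the orientation of $S_8$ and by the possible factor $(-1)^n$ in the configuration dependence of $I_\sigma$, and ruling out contamination from lower-weight cross terms that might in principle recombine to affect the leading coefficient. A secondary difficulty is purely combinatorial, namely showing that the nested sum produced by the iterated integration collapses to the single symmetric constraint $k_1+k_2=k_3+k_4$ rather than a more complicated system of relations; here the symmetry of the self-dual $\sigma_8$, together with index substitutions of the kind used in Lemma~\ref{lem_BinHar3} (replacing $k_i$ by $m-k_i$), should carry out the reductions.
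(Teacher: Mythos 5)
Your endgame is right, but your main route has a genuine gap precisely at the step you flag as ``the main obstacle,'' and it is worth being clear that this is not a technicality one grinds through: it is the step the paper itself declines to do. Your plan is, in effect, to prove directly that the leading coefficient $A_{\sigma_8}(n)$ equals the iterated residue (equivalently, the constant term $[x^0]\,D_n(x)^2D_n(x^{-1})^2$, which does equal the right-hand side of \eqref{eq:A86} and matches the paper's $J_{\sigma}(n)$ from Section~\ref{sec:binomial}). The ``retain only the weight-raising term at each stage'' recipe is heuristically the maximal iterated residue, but making it rigorous runs into two problems. First, the coefficient of $\zeta_5$ is only well-defined modulo the (open) linear independence of multiple zeta values of different weights, so ``the weight-$5$ part'' must be anchored to an explicit decomposition rather than extracted from numerical MZV identities; one would have to work formally (e.g., motivically) for the weight filtration argument to have content. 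Second, and more seriously, the simplex $S_8$ and the torus $|x_i|=\varepsilon$ are different cycles; identifying the top-weight coefficient of the simplex period with the torus period is a nontrivial statement about the period pairing, not a formal consequence of stage-by-stage series expansion, and you would also need to control the lower-weight cross terms and boundary contributions you mention. The paper explicitly says of exactly this identification that ``likely, one can prove this equality in a general uniform fashion'' --- i.e., it was not available, and the authors do not attempt it.

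What the paper actually does is what you relegate to a ``cross-check,'' and you should promote it to the proof. The authors evaluate $I_{\sigma}(n)$ for small $n$ in closed form using Panzer's \texttt{HyperInt}; compute, via Lairez's extension of Griffiths--Dwork reduction, an order-$7$ Picard--Fuchs equation for $F_\sigma(x)=\sum I_\sigma(n)x^n$, which translates into an order-$12$ recurrence satisfied by $I_\sigma(n)$ and hence by $A_\sigma(n)$; obtain by creative telescoping (Koutschan's \texttt{HolonomicFunctions}) a fourth-order recurrence for the binomial sum $B_\sigma(n)$; verify that the latter recurrence is a right factor of the former; and match initial values. This route entirely avoids the weight-extraction problem because the equality $A_\sigma(n)=B_\sigma(n)$ is forced by the common holonomic structure plus finitely many initial terms (the explicit evaluations $I_\sigma(0),I_\sigma(1)$ suffice, since the ODE has a two-dimensional space of analytic solutions at $x=0$). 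Your constant-term packaging and your series manipulations reproduce the paper's derivation of $J_\sigma(n)$ faithfully, so the combinatorial half of your proposal is sound; but as written, your primary argument rests on an unproven identification of $A_\sigma(n)$ with $J_\sigma(n)$, and without the recurrence verification (or a genuinely new uniform argument for that identification) the proof is incomplete.
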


\begin{proof}
  For $\sigma=\sigma_8$, consider the cellular integral $I_{\sigma} (n) = \int_{S_8} f_{\sigma}^n \,\,
  \omega_{\sigma}$ where
  \begin{equation}
    f_{\sigma} = \frac{(z_1 - z_2) (z_2 - z_3) (z_3 - z_4) (z_4 - z_5) (z_5 -
    z_6) (z_6 - z_7) (z_7 - z_8) (z_8 - z_1)}{(z_8 - z_3) (z_3 - z_6) (z_6 -
    z_1) (z_1 - z_4) (z_4 - z_7) (z_7 - z_2) (z_2 - z_5) (z_5 - z_8)} .
    \label{eq:f:z:86}
  \end{equation}
  As in Section \ref{sec:intro}, we let $z_1 = 0$, $z_7 =
  1$, $z_8 = \infty$. Then, in the coordinates $t_1 = z_2, t_2 = z_3, t_3 =
  z_4, t_4 = z_5, t_5 = z_6$, we have
  \begin{equation*}
    f_{\sigma} = \frac{(- t_1) (t_1 - t_2) (t_2 - t_3) (t_3 - t_4) (t_4 -
     t_5) (t_5 - 1)}{(t_2 - t_5) (t_5) (- t_3) (t_3 - 1) (1 - t_1) (t_1 -
     t_4)},
  \end{equation*}
  and
  \begin{equation*}
    \omega_{\sigma} = \frac{d t_1 d t_2 d t_3 d t_4
     d t_5}{(t_2 - t_5) (t_5) (- t_3) (t_3 - 1) (1 - t_1) (t_1 - t_4)} .
  \end{equation*}
  The domain $S_8$ of integration then consists of all $(t_1, t_2, \ldots,
  t_5) \in \mathbb{R}^5$ such that $0 < t_1 < t_2 < \ldots < t_5 < 1$.
  Algorithmic approaches to computing explicit period integrals, such as
  $I_{\sigma} (n)$ for specific values of $n$, are described in
  \cite{bb-feynman} or \cite{panzer-hyperint}. In particular, Panzer
  implemented his symbolic integration approach \cite{panzer-hyperint} using
  hyperlogarithms in a Maple package called \texttt{HyperInt}. Using this
  package, we explicitly evaluate $I_{\sigma} (n)$ in terms of multiple zeta
  values for several small values of $n$, and obtain:
  \begin{eqnarray*}
    I_{\sigma} (0) & = & 16 \zeta (5) - 8 \zeta (3) \zeta (2)\\
    I_{\sigma} (1) & = & 33 I_{\sigma} (0) - 432 \zeta (3) + 316 \zeta (2) -
    26\\
    I_{\sigma} (2) & = & 8929 I_{\sigma} (0) - 117500 \zeta (3) +
    \tfrac{515189}{6} \zeta (2) - \tfrac{331063}{48}\\
    I_{\sigma} (3) & = & 4124193 I_{\sigma} (0) - 54272204 \zeta (3) +
    \tfrac{10708231609}{270} \zeta (2) - \tfrac{12385477271}{3888}\\
    I_{\sigma} (4) & = & 2435948001 I_{\sigma} (0) - 32055790815 \zeta (3) +
    \tfrac{23612586361625}{1008} \zeta (2) - \tfrac{78031593554765}{41472}.
  \end{eqnarray*}
  As proven by Brown in \cite[Section~4]{brown-apery},
  the integrals $I_{\sigma} (n)$ satisfy a linear recurrence with polynomial
  coefficients. Slightly more specifically, the ordinary generating function
  \begin{equation*}
    F_{\sigma} (x) \assign \sum_{n = 0}^{\infty} I_{\sigma} (n) x^n
  \end{equation*}
  satisfies a Picard--Fuchs differential equation. Again, in each specific
  instance, this differential equation can be obtained algorithmically. A
  particularly efficient such approach is an extension of the Griffiths--Dwork
  reduction method due to Lairez \cite{lairez-periods} for computing periods
  of rational functions. To apply this method, we observe that
  \begin{equation*}
    F_{\sigma} (x) = \int_{S_8}
     \frac{\omega_{\sigma}}{1 - f_{\sigma} x} = \int_{S_8} \frac{d t_1
     d t_2 d t_3 d t_4 d t_5}{A - B x},
  \end{equation*}
  where
  \begin{eqnarray*}
    A & = & (t_2 - t_5) (t_5) (- t_3) (t_3 - 1) (1 - t_1) (t_1 - t_4),\\
    B & = & (- t_1) (t_1 - t_2) (t_2 - t_3) (t_3 - t_4) (t_4 - t_5) (t_5 - 1).
  \end{eqnarray*}
  Lairez's method (implemented in Magma) then successfully determines
  a Fuchsian differential equation of order $7$ satisfied by $F_{\sigma} (x)$. 
  This differential equation has a two-dimensional space of analytic solutions around $x = 0$. As a consequence, this differential equation together with the two values $I_{\sigma} (0)$, $I_{\sigma} (1)$, explicitly obtained above, determines the values of the cellular integrals $I_{\sigma} (n)$ for all $n \geq 2$.
  
  Alternatively, the differential equation for the generating function
  $F_{\sigma} (x)$ translates directly into a recurrence of order $12$ for the
  coefficients $I_{\sigma} (n)$ and, hence, for the leading coefficients
  $A_{\sigma} (n)$. That is, we find that
  \begin{equation*}
    1521 n (n + 1)^5 (2 n + 1) A_{\sigma} (n + 1) = \sum_{j = 0}^{11} q_j (n)
     A_{\sigma} (n - j)
  \end{equation*}
  for certain polynomials $q_j (x) \in \mathbb{Z} [x]$ of degree $7$. To
  complete the proof, it therefore only remains to verify that the numbers
  \begin{equation*}
    B_{\sigma} (n) \assign \sum_{\substack{
       k_1, k_2, k_3, k_4 =0 \\
       k_1 + k_2 = k_3 + k_4
     }}^{n} \prod_{i = 1}^4 \binom{n}{k_i} \binom{n + k_i}{n}
  \end{equation*}
  satisfy the same recurrence with matching initial values. This can be done
  algorithmically using, for instance, creative telescoping. In practice, the
  fact that $B_{\sigma} (n)$ is a triple sum makes the computation of the
  recurrence rather challenging. Yet, Koutschan's Mathematica package
  \texttt{HolonomicFunctions} \cite{koutschan-phd} is able to determine a
  fourth order linear recurrence for $B_{\sigma} (n)$ (see below for more
  information on this recurrence). We then verify that this recurrence is a
  right factor of the earlier recurrence of order $12$ for $A_{\sigma} (n)$.
  Since the first $12$ initial values match (in fact, additional reflection
  shows that two matching initial values suffice), we conclude that
  $A_{\sigma} (n) = B_{\sigma} (n)$.
\end{proof}

The proof of Proposition~\ref{prop:A86} demonstrates that any individual evaluation of leading
coefficients in terms of binomial sums can, in principle, be algorithmically proven due to recent advances
in symbolic computation. It is curious to note that we had to use Maple, Magma and Mathematica in that computation.

\begin{remark}
We find that $A_{\sigma_8} (n)$, with $\sigma_8$ as in
Proposition~\ref{prop:A86}, is the unique solution of a fourth order
recurrence
\begin{equation*}
  p_4 (n) a_{n + 4} + p_3 (n) a_{n + 3} + p_2 (n) a_{n + 2} + p_1 (n) a_{n +
   1} + p_0 (n) a_n = 0,
\end{equation*}
with initial conditions $a_0 = 1, a_1 = 33$ and $a_j = 0$ for $j < 0$. Here,
the coefficients $p_j (n)$ are polynomials of degree $15$, satisfying
\begin{equation*}
  p_j (n) = - p_{4 - j} (- 5 - n),
\end{equation*}
for $j \in \{ 1, 2, 3, 4 \}$. The latter relation is a consequence of the fact
that the configuration $\sigma_8$ is self-dual (see \cite[Section~4]{brown-apery}).
\end{remark}

\subsection{Finding multiple binomial sum representations for $A_{\sigma} (n)$}

In this section, we illustrate how binomial sum representations can be found for any convergent configuration $\sigma$.
As in the previous section, we proceed with the configuration $\sigma = \sigma_8 = (8, 3, 6, 1, 4, 7, 2, 5)$.
Consider the cellular integral
\begin{equation*}
  I_{\sigma} (n) = \int_{S_8} f_{\sigma}^n \,\, \omega_{\sigma},
\end{equation*}
where $f_{\sigma}$ is as in \eqref{eq:f:z:86}. Because the action of
$\operatorname{PGL}_2$ is triply transitive, we may also make the convenient choice
$z_{\sigma (6)} = z_7 = 1$, $z_{\sigma (7)} = z_2 = 0$, $z_{\sigma (8)} = z_5
= \infty$. Then, in the coordinates $t_1 = z_1, t_2 = z_3, t_3 = z_4, t_4 =
z_6, t_5 = z_8$, we have
\begin{equation*}
  f_{\sigma} = \frac{(t_1) (- t_2) (t_2 - t_3) (t_4 - 1) (1 - t_5) (t_5 -
   t_1)}{(t_5 - t_2) (t_2 - t_4) (t_4 - t_1) (t_1 - t_3) (t_3 - 1)}
\end{equation*}
and
\begin{equation*}
  \omega_{\sigma} = \frac{dt_1 dt_2 dt_3 dt_4 dt_5}{(t_5 - t_2) (t_2 - t_4) (t_4 - t_1) (t_1 - t_3) (t_3 - 1)} .
\end{equation*}
We then substitute
\begin{equation*}
  x_1 = t_5 - t_2, \quad x_2 = t_2 - t_4, \quad x_3 = t_4 - t_1, \quad x_4 =
   t_1 - t_3, \quad x_5 = t_3 - 1.
\end{equation*}
Observe that $t_3 = x_5 + 1$, $t_1 = x_4 + x_5 + 1$ and, likewise,
\begin{equation*}
  t_4 = x_3 + x_4 + x_5 + 1, \quad t_2 = x_2 + x_3 + x_4 + x_5 + 1, \quad t_5
   = x_1 + x_2 + x_3 + x_4 + x_5 + 1,
\end{equation*}
so that, up to a sign in $\omega_{\sigma}$,
\begin{eqnarray*}
  f_{\sigma} & = & \frac{x_{4, 6} x_{2, 6} x_{2, 4} x_{3, 5} x_{1, 5} x_{1,
  3}}{x_1 x_2 x_3 x_4 x_5},\\
  \omega_{\sigma} & = & \frac{dx_1 dx_2 dx_3 dx_4 d x_5}{x_1 x_2 x_3 x_4 x_5},
\end{eqnarray*}
where $x_{i, j} = x_i + x_{i + 1} + \ldots + x_j$ with $x_6 = 1$.

At this point, it is natural to also consider the integral
\begin{equation*}
  J_{\sigma} (n) = \frac{1}{(2\pi i)^5} \oint_{| x_i | = \varepsilon} f_{\sigma}^n \omega_{\sigma}
   = \frac{1}{(2\pi i)^5} \oint_{| x_i | = \varepsilon} \left(\frac{x_{4, 6} x_{2, 6} x_{2, 4}
   x_{3, 5} x_{1, 5} x_{1, 3}}{x_1 x_2 x_3 x_4 x_5} \right)^n \frac{d x_1 d x_2 d x_3 d x_4 d x_5}{x_1 x_2 x_3 x_4 x_5},
\end{equation*}
where $\varepsilon$ is chosen sufficiently small so that the integrals
converge. By the residue theorem, this integral evaluates to
\begin{equation*}
  J_{\sigma} (n) = [(x_1 x_2 x_3 x_4 x_5)^n] (x_{4, 6} x_{2, 6} x_{2, 4}
   x_{3, 5} x_{1, 5} x_{1, 3})^n .
\end{equation*}
In particular, the values $J_{\sigma} (n)$ are nonnegative integers. By the
principle of creative telescoping, we can derive a linear recurrence which is
satisfied by both $I_{\sigma} (n)$ and $J_{\sigma} (n)$.
In fact, we are going to show that the leading coefficients $A_{\sigma} (n)$ of
$I_{\sigma} (n)$ are equal to $J_{\sigma} (n)$. Likely, one can prove
this equality in a general uniform fashion (for instance, following the approach
of \cite{dupont-odd}, as suggested by Dupont).
For our purposes, it suffices to observe that both sequences satisfy a common
recurrence and agree to sufficiently many terms. It remains to express
$J_{\sigma} (n)$ in terms of a multiple binomial sum.

Here, it will be convenient to not specialize $x_6$. In order to find an
explicit formula for $J_{\sigma} (n)$, we expand
\begin{equation*}
  \Lambda = (x_{1, 5} x_{2, 6} x_{1, 3} x_{2, 4} x_{3, 5} x_{4, 6})^n
\end{equation*}
using the binomial theorem and then extract the coefficient of $(x_1 x_2 x_3
x_4 x_5 x_6)^n$. Some care needs to be applied at this stage, because the
order in which terms are expanded can have a considerable influence on the
final binomial sum. For instance, the number of summations can vary
substantially. In the present case, it is natural to first expand
\begin{equation*}
  x_{1, 5}^n = \sum_{k_1 = 0}^n \binom{n}{k_1} x_{1, 3}^{k_1} x_{4, 5}^{n -
   k_1}, \quad x_{2, 6} = \sum_{k_2 = 0}^n \binom{n}{k_2} x_{4, 6}^{k_2} x_{2,
   3}^{n - k_2},
\end{equation*}
and, in a second step, $x_{1, 3}^{n + k_1}$ as well as $x_{4, 6}^{n + k_4}$,
so that $\Lambda$ equals
\begin{eqnarray*}
  &  & \sum_{k_1, k_4} \binom{n}{k_1} \binom{n}{k_4} x_{1, 3}^{n + k_1} x_{2,
  4}^n x_{3, 5}^n x_{4, 6}^{n + k_4} x_{2, 3}^{n - k_4} x_{4, 5}^{n - k_1}\\
  & = & \sum_{k_1, k_4, \ell_1, \ell_4} \binom{n}{k_1} \binom{n}{k_4}
  \binom{n + k_1}{\ell_1} \binom{n + k_4}{\ell_4} x_{2, 4}^n x_{3, 5}^n x_{2,
  3}^{n - k_4 + \ell_1} x_{4, 5}^{n - k_1 + \ell_4} x_1^{n + k_1 - \ell_1}
  x_6^{n + k_4 - \ell_4}.
\end{eqnarray*}
Since $J_{\sigma} (n)$ is the coefficient of $(x_1 x_2 x_3 x_4 x_5 x_6)^n$ in
$\Lambda$, we conclude that $J_{\sigma} (n)$ is the coefficient of $(x_2 x_3
x_4 x_5)^n$ in
\begin{eqnarray}
  &  & \sum_{k_1, k_4} \binom{n}{k_1} \binom{n}{k_4} \binom{n + k_1}{k_1}
  \binom{n + k_4}{k_4} x_{2, 4}^n x_{3, 5}^n x_{2, 3}^{n - k_4 + k_1} x_{4,
  5}^{n - k_1 + k_4} .  \label{eq:A86:14}
\end{eqnarray}
We next expand $x_{2, 4}^n$ and $x_{3, 5}^n$ to obtain
\begin{eqnarray*}
  &  & x_{2, 4}^n x_{3, 5}^n x_{2, 3}^{n - k_4 + k_1} x_{4, 5}^{n - k_1 +
  k_4}\\
  & = & \sum_{k_2, k_3} \binom{n}{k_2} \binom{n}{k_3} x_{2, 3}^{n - k_4 + k_1
  + k_2} x_{4, 5}^{n - k_1 + k_4 + k_3} x_3^{n - k_3} x_4^{n - k_2}\\
  & = & \sum_{k_2, k_3, \ell_2, \ell_3} \binom{n}{k_2} \binom{n}{k_3}
  \binom{n - k_4 + k_1 + k_2}{\ell_2} \binom{n - k_1 + k_4 + k_3}{\ell_3}\\
  &  & \qquad \times \,\, x_2^{\ell_2} x_3^{n - k_3 + n - k_4 + k_1 + k_2 - \ell_2}
  x_4^{n - k_2 + n - k_1 + k_4 + k_3 - \ell_3} x_5^{\ell_3}.
\end{eqnarray*}
The coefficient of $(x_2 x_3 x_4 x_5)^n$ in that sum is
\begin{equation*}
  \sum_{k_2, k_3} \binom{n}{k_2} \binom{n}{k_3} \binom{n + k_2}{n} \binom{n +
   k_3}{n},
\end{equation*}
subject to the constraint $k_1 + k_2 = k_3 + k_4$. Finally, combined with
\eqref{eq:A86:14}, we conclude that
\begin{equation*}
  J_{\sigma} (n) = \sum_{\substack{
     k_1, k_2, k_3, k_4 =0 \\
     k_1 + k_2 = k_3 + k_4
   }}^{n} \prod_{i = 1}^4 \binom{n}{k_i} \binom{n + k_i}{n},
\end{equation*}
which is the binomial sum of Proposition~\ref{prop:A86}. A similar procedure has been carried out to find explicit binomial expressions for the remaining 897 leading coefficients $A_{\sigma_N}(n)$ for $7 \leq N \leq 10$. Again, these expressions are available upon request.

\section{Proof of Theorem \ref{main2}} \label{sec:proof2}

\subsection{Congruences for binomial coefficients and harmonic sums}

For a nonnegative integer $n$, we define the harmonic sum ${H}_{n}$ by
\begin{equation*}
  {H}_{n} \assign \sum^{n}_{j=1} \frac{1}{j}
\end{equation*}
and ${H}_{0} \assign 0$.
We first recall some elementary congruences (see Section 7.7, Theorems 133, 132 and 116 in \cite{HW}): For $p$ an odd prime, we have

\begin{equation} \label{p1}
{\left(\frac{p-1}{2}\right)!}^4 \equiv 1 \pmod{p},
\end{equation}

\begin{equation} \label{p3}
\binom{p-1}{\frac{p-1}{2}} \equiv (-1)^{\frac{p-1}{2}} \, 2^{2p-2} \pmod{p^2},
\end{equation}

\begin{equation} \label{p4}
2^{p-1}-1 \equiv p \left( 1 + \frac{1}{3} + \frac{1}{5} \cdots + \frac{1}{p-2}\right) \pmod{p^2}
\end{equation}
and, for primes $p > 3$, 
\begin{equation} \label{p2}
H_{p-1} \equiv 0 \pmod{p^2}.
\end{equation}

We will also need the following result, which follows easily from \eqref{p4} and \eqref{p2}.

\begin{lemma}\label{cor_2powerp2}
For a prime $p>3$,
$$2^{2p-2} \equiv 1 - p H_{\frac{p-1}{2}} \pmod{p^2}.$$
\end{lemma}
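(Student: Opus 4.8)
The plan is to start from the expansion of $2^{p-1}$ recorded in \eqref{p4}, square it to reach $2^{2p-2}$, and then recast the resulting sum over odd denominators in terms of $H_{\frac{p-1}{2}}$ by means of \eqref{p2}. Throughout I abbreviate $S \assign 1 + \frac13 + \frac15 + \cdots + \frac{1}{p-2}$, the sum of reciprocals of the odd integers up to $p-2$, so that \eqref{p4} reads $2^{p-1} \equiv 1 + pS \pmod{p^2}$.

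First I would square this congruence. Since $(1+pS)^2 = 1 + 2pS + p^2 S^2$ and the last term is divisible by $p^2$, this gives $2^{2p-2} \equiv 1 + 2pS \pmod{p^2}$. Comparing with the desired identity, it therefore suffices to prove $2pS \equiv -p H_{\frac{p-1}{2}} \pmod{p^2}$, which (dividing through by $p$) is equivalent to the mod-$p$ statement $2S \equiv -H_{\frac{p-1}{2}} \pmod{p}$.

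To establish this, I would split the full harmonic sum $H_{p-1}$ according to the parity of the denominator. The even denominators $2, 4, \ldots, p-1$ contribute $\sum_{i=1}^{(p-1)/2} \frac{1}{2i} = \frac12 H_{\frac{p-1}{2}}$, while the odd ones contribute exactly $S$; hence $H_{p-1} = S + \frac12 H_{\frac{p-1}{2}}$. Multiplying by $2$ and invoking \eqref{p2} (only the weaker fact $H_{p-1} \equiv 0 \pmod{p}$ is needed here) yields $2S + H_{\frac{p-1}{2}} = 2 H_{p-1} \equiv 0 \pmod{p}$, which is precisely the required congruence. Combining this with the previous paragraph gives $2^{2p-2} \equiv 1 + 2pS \equiv 1 - p H_{\frac{p-1}{2}} \pmod{p^2}$.

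There is no substantial obstacle in this argument; it is a short manipulation of the two cited congruences. The only point demanding care is the mod-$p$ versus mod-$p^2$ bookkeeping, namely that discarding the $p^2 S^2$ term after squaring is legitimate and that a congruence valid modulo $p$ may be multiplied by $p$ to yield one valid modulo $p^2$. The hypothesis $p > 3$ enters solely through the inputs \eqref{p4} and \eqref{p2}.
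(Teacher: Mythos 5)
Your proof is correct and follows exactly the route the paper intends: the paper gives no explicit argument, asserting only that the lemma ``follows easily from \eqref{p4} and \eqref{p2}'', and your squaring of \eqref{p4} together with the parity split $H_{p-1} = S + \tfrac12 H_{\frac{p-1}{2}}$ and the mod-$p$ consequence of \eqref{p2} is precisely that easy deduction, carried out with the mod-$p$ versus mod-$p^2$ bookkeeping handled correctly.
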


For nonnegative integers $n$, let $\ph{a}{n} \assign a(a+1)(a+2)\dotsm(a+n-1)$ denote the rising factorial, with $\ph{a}{0}:=1$.
Let $p$ be an odd prime. We note that, for $0 \leq k \leq (p-1)/2$,
\begin{align*}
  \binom{\frac{p-1}{2}+k}{k}
  \equiv
  \frac{(1+k)(2+k) \cdots (\frac{p-1}{2} + k)}{\fac{\left(\frac{p-1}{2}\right)}}
  \equiv
  \frac{(k+1)_{\frac{p-1}{2}}}{\fac{\left(\frac{p-1}{2}\right)}}
  \pmod{p},
\end{align*}
and, similarly,
\begin{equation}\label{eq:Bin_to_Poch}
\binom{\frac{p-1}{2}}{k} \binom{\frac{p-1}{2}+k}{k}
\equiv
(-1)^k \, \left(\frac{(k+1)_{\frac{p-1}{2}}}{\left(\frac{p-1}{2}\right)!}\right)^2
\pmod{p},
\end{equation}
as well as
\begin{align}\label{eq:bin2:p}
  \binom{p-1-k}{\frac{p-1}{2}}
  &\equiv
  (-1)^{\frac{p-1}{2}} \,
  \binom{\frac{p-1}{2}+k}{k}
  \pmod{p}.
\end{align}

\begin{lemma}\label{lem_BinHar2}
For $p$ an odd prime,
\begin{equation*}
\sum_{k=0}^{p-1} (k+1)_{\frac{p-1}{2}}^2 \equiv -1 \pmod{p}.
\end{equation*}
\end{lemma}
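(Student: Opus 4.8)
The plan is to regard the summand as a polynomial in $k$ and apply the elementary evaluation of power sums modulo $p$. Set $m \assign \frac{p-1}{2}$, so that $2m = p-1$. The rising factorial $(k+1)_m = (k+1)(k+2)\cdots(k+m)$ is a monic polynomial in $k$ of degree $m$, hence $(k+1)_m^2$ is a monic polynomial in $k$ of degree $2m = p-1$. Write it as
\[
  (k+1)_m^2 = \sum_{j=0}^{p-1} c_j\, k^j, \qquad c_{p-1} = 1.
\]

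The key ingredient I would invoke is the standard congruence for power sums: for a nonnegative integer $j$ one has $\sum_{k=0}^{p-1} k^j \equiv -1 \pmod{p}$ when $(p-1)\mid j$ and $j \geq 1$, and $\sum_{k=0}^{p-1} k^j \equiv 0 \pmod{p}$ otherwise (the case $j=0$ giving $p \equiv 0$). Summing the displayed expansion over $k$ and interchanging the order of summation, every term with $0 \le j \le p-2$ contributes $0 \pmod{p}$, while the top-degree term $j = p-1$ contributes $c_{p-1}\cdot(-1) = -1$. Therefore
\[
  \sum_{k=0}^{p-1} (k+1)_m^2 \equiv -c_{p-1} \equiv -1 \pmod{p},
\]
which is exactly the claim.

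There is no genuine obstacle here; the only point requiring a moment's care — and it is the crux of the argument — is that the degree of $(k+1)_m^2$ as a polynomial in $k$ is \emph{exactly} $p-1$ with leading coefficient \emph{exactly} $1$, which is precisely what the identity $2m = p-1$ guarantees. Once the degree and leading coefficient are pinned down, the conclusion is purely formal from the power-sum congruence, so nothing beyond that elementary fact is needed. (One can sanity-check the result on small primes, e.g.\ $p=5$ gives $4+36+144+400+900 = 1484 \equiv -1 \pmod 5$.)
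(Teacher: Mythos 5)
Your proof is correct and follows essentially the same route as the paper's: both expand $(k+1)_{\frac{p-1}{2}}^2$ as a monic polynomial of degree $p-1$ in $k$ and apply the standard power-sum congruence $\sum_{k=0}^{p-1} k^j \equiv -1 \pmod{p}$ when $(p-1)\mid j$, $j\geq 1$, and $\equiv 0$ otherwise, so that only the leading coefficient survives. You also correctly handle the constant term (contributing $p\,c_0 \equiv 0$), which is the same small point the paper treats by separating $b_0$.
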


\begin{proof}
Let $m \assign \frac{p-1}{2}$ and write
\begin{equation*}
(k+1)_{m}^2
= b_0+  \sum_{t=1}^{2m} b_t k^t
\end{equation*}
for appropriate integers $b_t$.
Recall that, for positive integers $s$,
\begin{equation}\label{exp_sums}
\sum^{p-1}_{j=1} j^s\equiv
\begin{cases}
-1 \pmod {p},& \text{if $(p-1) \vert s$} \, ,\\
\phantom{-}0 \pmod {p},& \text{otherwise} \, .
\end{cases}
\end{equation}
Therefore,
\begin{equation*}
\sum_{k=0}^{p-1} (k+1)_{m}^2
= p \, b_0 + \sum_{k=0}^{p-1} \sum_{t=1}^{2m} b_t k^t
= p \, b_0 +  \sum_{t=1}^{2m} b_t \sum_{k=1}^{p-1} k^t
\equiv
- b_{p-1}
\pmod{p}.
\end{equation*}
Since $(k+1)_{m}^2$ is monic, $b_{p-1}=1$.
\end{proof}

\begin{lemma}\label{lem_BinHar1}
For $p$ an odd prime,
\begin{equation}\label{eq:BinHar1}
\sum_{\substack{k_1, k_2, k_3, k_4=0\\ \sum k_i  = p-1}}^{\frac{p-1}{2}} 
\left[
\prod_{i=1}^{4}
{(k_i+1)}_{\frac{p-1}{2}}^2
\right]
\left(
H_{\frac{p-1}{2}+k_4}-H_{k_4}
\right)
\equiv 0
\pmod{p}.
\end{equation}
\end{lemma}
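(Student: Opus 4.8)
The plan is to exploit a reflection symmetry of the summation simplex. Set $m:=\frac{p-1}{2}$, and abbreviate the weight $w(k):=(k+1)_{m}^{2}$ and the harmonic factor $g(k):=H_{m+k}-H_{k}=\sum_{j=1}^{m}\frac{1}{k+j}$, so that the left-hand side of \eqref{eq:BinHar1} is
\[
S:=\sum_{\mathbf{k}\in R}w(k_1)\,w(k_2)\,w(k_3)\,w(k_4)\,g(k_4),
\qquad
R:=\Bigl\{(k_1,k_2,k_3,k_4): 0\le k_i\le m,\ \textstyle\sum_i k_i=2m\Bigr\}.
\]
The key observation is that the involution $\phi\colon (k_1,k_2,k_3,k_4)\mapsto (m-k_1,m-k_2,m-k_3,m-k_4)$ is a bijection of $R$ onto itself, since each $m-k_i$ again lies in $[0,m]$ and $\sum_i(m-k_i)=4m-2m=2m$. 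The whole proof rests on tracking how $w$ and $g$ transform under $\phi$ modulo $p$.

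First I would show that $(m-k+1)_{m}\equiv(-1)^{m}(k+1)_{m}\pmod{p}$. Writing $(m-k+1)_m=\prod_{j=1}^{m}(m-k+j)$ and reversing the order of the factors replaces $m-k+j$ by $m-k+(m+1-j)=2m+1-k-j=p-(k+j)\equiv-(k+j)\pmod p$; the product of the $m$ sign changes is $(-1)^m$. Squaring yields $w(m-k)\equiv w(k)\pmod p$, so the weight is reflection-invariant. The identical reversal applied to the harmonic factor gives
\[
g(m-k)=\sum_{j=1}^{m}\frac{1}{m-k+j}\equiv\sum_{j=1}^{m}\frac{1}{-(k+j)}=-g(k)\pmod p,
\]
so the harmonic factor is reflection-\emph{anti}-invariant.

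With these two transformation rules in hand the conclusion is immediate: re-indexing the sum $S$ by the bijection $\phi$ and reducing modulo $p$ gives
\[
S=\sum_{\mathbf{k}\in R}\prod_{i=1}^{4}w(m-k_i)\,g(m-k_4)\equiv\sum_{\mathbf{k}\in R}\prod_{i=1}^{4}w(k_i)\,\bigl(-g(k_4)\bigr)=-S\pmod p,
\]
whence $2S\equiv 0\pmod p$ and therefore $S\equiv 0\pmod p$, as $p$ is odd.

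I expect the only genuine subtleties to be bookkeeping ones rather than a serious obstacle. One must check that $S$ is $p$-integral so that the congruence makes sense: the factors $(k_i+1)_m$ are integers, while every denominator $k+j$ appearing in $g$ satisfies $1\le k+j\le 2m=p-1$ and is hence prime to $p$; this same bound is exactly what legitimizes replacing $p-(k+j)$ by $-(k+j)$ in the inverse. The other point to keep straight is that the crucial minus sign comes \emph{solely} from $g$—the sign $(-1)^m$ picked up by $(k+1)_m$ disappears upon squaring—so the weight contributes trivially. Note in particular that the symmetry of $w(k_1)w(k_2)w(k_3)w(k_4)$ in the four indices is not even needed; the single reflection involution $\phi$ suffices. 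The one creative step is spotting this reflection symmetry in the first place; everything after that is routine verification.
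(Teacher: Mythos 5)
Your proof is correct and is essentially identical to the paper's own argument: both rest on the reflection $k_i \mapsto m-k_i$ of the simplex $\sum k_i = p-1$, the congruence $(m-k+1)_m \equiv (-1)^m (k+1)_m \pmod{p}$ (whose sign vanishes on squaring), and the anti-invariance $H_{2m-k}-H_{m-k} \equiv -(H_{m+k}-H_k) \pmod{p}$, yielding $S \equiv -S$. Your added checks of $p$-integrality of the denominators $k+j \le p-1$ are sound and implicit in the paper's computation.
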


\begin{proof}
For brevity, we again write $m=\frac{p-1}{2}$. Let $0 \leq k \leq \frac{p-1}{2}$ and consider
\begin{align*}
(m-k+1)_m
&= (\tfrac{p-1}{2}-k+1) (\tfrac{p-1}{2}-k+2) \cdots (p-2-k) (p-1-k)\\
&\equiv (-\tfrac{p-1}{2}-k) (-\tfrac{p-1}{2}+1-k) \cdots (-2-k) (-1-k)\\
&\equiv (-1)^m (k+1)_m
\pmod{p}.
\end{align*}
Similarly,
\begin{align*}
  H_{2m-k}-H_{m-k}
  &=\frac{1}{p-1-k}+\frac{1}{p-2-k} + \cdots + \frac{1}{\frac{p-1}{2}-k+2} +\frac{1}{\frac{p-1}{2}-k+1}\\
  &\equiv \frac{1}{-1-k}+\frac{1}{-2-k} + \cdots + \frac{1}{-\frac{p-1}{2}-k+1} +\frac{1}{-\frac{p-1}{2}-k} \\
  &\equiv - \left(H_{m+k}-H_{k}\right)
  \pmod{p}.
\end{align*}
Let
\begin{align*}
  S = 
  \sum_{\substack{k_1, k_2, k_3, k_4=0\\ \sum k_i  = p-1}}^{m} 
  \left[
  \prod_{i=1}^{4}
  {(k_i+1)}_{m}^2
  \right]
  \left(
  H_{m+k_4}-H_{k_4}
  \right)
\end{align*}
be the left-hand side of \eqref{eq:BinHar1}.
By replacing $k_i$ with $m-k_i$, for $i\in\{1,2,3,4\}$, we see that
\begin{align*}
  S =
  \sum_{\substack{k_1, k_2, k_3, k_4=0\\ \sum k_i  = p-1}}^{m} 
  \left[ \prod_{i=1}^{4} {(m-k_i+1)}_{m}^2 \right] \left( H_{2m - k_4}-H_{m-k_4} \right)
  \equiv -S \pmod{p}.
\end{align*}
Hence, the sum $S$ must vanish modulo $p$.
\end{proof}

\begin{lemma}\label{lem_BinHar3}
For $p$ an odd prime,
\begin{equation*}
  \sum_{\substack{k_1, k_2, k_3, k_4=0\\ \sum k_i  = p-1}}^{\frac{p-1}{2}} \,
  \prod_{i=1}^{4}
  \binom{\frac{p-1}{2}}{k_i} \binom{\frac{p-1}{2}+k_i}{k_i}
  \equiv
  \sum_{\substack{k_1,k_2, k_3, k_4 = 0\\k_1+k_2=k_3+k_4}}^{\frac{p-1}{2}}
  \prod_{i=1}^{4}
  \binom{\frac{p-1}{2}}{k_i} \binom{\frac{p-1}{2}+k_i}{k_i}
  \pmod{p^2}.
\end{equation*}
\end{lemma}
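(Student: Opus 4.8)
The plan is to relate the two sums through the involution $k \mapsto m-k$ on the summation range, where $m := \frac{p-1}{2}$. Abbreviating $c_k := \binom{m}{k}\binom{m+k}{k}$, both sides are sums of the same symmetric summand $\prod_{i=1}^4 c_{k_i}$ over $0 \le k_i \le m$, differing only in the linear constraint: the left-hand side imposes $k_1+k_2+k_3+k_4 = p-1 = 2m$, while the right-hand side imposes $k_1+k_2 = k_3+k_4$. Substituting $k_3 \mapsto m-k_3$ and $k_4 \mapsto m-k_4$ (a bijection of $\{0,\dots,m\}^2$) carries $k_1+k_2=k_3+k_4$ exactly onto $k_1+k_2+k_3+k_4=2m$. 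Thus the right-hand side equals $\sum_{\sum k_i = 2m} c_{k_1} c_{k_2} c_{m-k_3} c_{m-k_4}$, and the whole problem reduces to expressing $c_{m-k}$ modulo $p^2$ in terms of $c_k$.

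The key computational step is a mod-$p^2$ refinement of the reflection relation for $c_k$. Since $2m=p-1$, one has exactly $c_{m-k} = \binom{m}{k}\binom{p-1-k}{m}$, and expanding the numerator $\prod_{j=1}^m\bigl(p-(k+j)\bigr)$ to first order in $p$ yields
\begin{equation*}
c_{m-k} \equiv (-1)^m c_k \bigl(1 - p(H_{m+k}-H_k)\bigr) \pmod{p^2},
\end{equation*}
which sharpens the mod-$p$ identity $c_{m-k}\equiv(-1)^m c_k$ underlying \eqref{eq:bin2:p}. Applying this to both $c_{m-k_3}$ and $c_{m-k_4}$, the signs $(-1)^m$ cancel in pairs and the two linear-in-$p$ factors multiply to $1 - p[(H_{m+k_3}-H_{k_3})+(H_{m+k_4}-H_{k_4})]$ modulo $p^2$. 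Hence the right-hand side becomes, modulo $p^2$,
\begin{equation*}
\sum_{\sum k_i = 2m} \prod_{i=1}^4 c_{k_i} \;-\; p \sum_{\sum k_i = 2m} \Bigl[\prod_{i=1}^4 c_{k_i}\Bigr]\bigl[(H_{m+k_3}-H_{k_3})+(H_{m+k_4}-H_{k_4})\bigr].
\end{equation*}
The first term is precisely the left-hand side, so it remains to show that the correction term vanishes modulo $p^2$, i.e. that the bracketed sum vanishes modulo $p$.

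For this last reduction I would use symmetry of the summand under $k_3\leftrightarrow k_4$ to replace the correction sum by $2\sum_{\sum k_i = 2m}\prod_{i=1}^4 c_{k_i}(H_{m+k_4}-H_{k_4})$, and then pass to the mod-$p$ description of the summand: by \eqref{eq:Bin_to_Poch}, $c_k \equiv (-1)^k (k+1)_m^2/(m!)^2 \pmod p$, and since $\sum k_i = 2m$ is even while $(m!)^8\equiv 1 \pmod p$ by \eqref{p1}, one gets $\prod_{i=1}^4 c_{k_i} \equiv \prod_{i=1}^4 (k_i+1)_m^2 \pmod p$. Because $2m=p-1$, the correction sum then becomes exactly twice the sum shown to vanish in Lemma~\ref{lem_BinHar1}, and the factor $2$ is harmless as $p$ is odd. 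The step I expect to be the main obstacle is the middle paragraph: pinning down the correct first-order correction $1-p(H_{m+k}-H_k)$ in the reflection congruence, and then verifying that after the $k_3\leftrightarrow k_4$ symmetrization and mod-$p$ reduction the surviving harmonic factor is precisely the one already controlled by Lemma~\ref{lem_BinHar1}. Everything else is routine bookkeeping around the involution $k\mapsto m-k$.
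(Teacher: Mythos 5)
Your proof is correct, but it takes a genuinely different route from the paper's. Both arguments begin with the same observation that the involution $k\mapsto m-k$ (with $m=\frac{p-1}{2}$) on two of the four indices carries the constraint $k_1+k_2=k_3+k_4$ onto $\sum k_i=p-1$; the paper reflects $k_1,k_2$, you reflect $k_3,k_4$, which is immaterial. Where you diverge is in how the reflected binomials $\binom{2m-k}{m-k}$ are compared with $\binom{m+k}{k}$ modulo $p^2$. The paper never expands to first order in $p$: writing $u_i=\binom{m+k_i}{k_i}$, $v_i=\binom{2m-k_i}{m-k_i}$ and $P=\prod_i\binom{m}{k_i}$, it symmetrizes over the all-variable reflection $\underline{k}\mapsto m-\underline{k}$ (which preserves the constraint) and notes that the symmetrized difference factors as $P\,(v_1v_2-u_1u_2)(u_3u_4-v_3v_4)$, where each parenthesized factor vanishes mod $p$ by \eqref{eq:bin2:p}; hence the difference of the two sums, doubled, vanishes mod $p^2$, and oddness of $p$ finishes the argument. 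You instead sharpen the reflection to the explicit mod-$p^2$ congruence $\binom{m}{m-k}\binom{2m-k}{m-k}\equiv(-1)^m\binom{m}{k}\binom{m+k}{k}\bigl(1-p(H_{m+k}-H_k)\bigr)$, which is correct (expand $\binom{p-1-k}{m}=\prod_{j=1}^{m}\bigl(p-(k+j)\bigr)/\fac{m}$, all factors $k+j\le p-1$ being prime to $p$), and then dispose of the resulting $O(p)$ correction by the $k_3\leftrightarrow k_4$ symmetrization, the mod-$p$ reduction via \eqref{eq:Bin_to_Poch} and \eqref{p1} (valid since $\sum k_i=p-1$ is even and all harmonic sums involved are $p$-integral), and Lemma~\ref{lem_BinHar1}. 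What the paper's factorization trick buys is economy: it needs only the mod-$p$ congruence \eqref{eq:bin2:p}, reserving Lemma~\ref{lem_BinHar1} for the $g_2$-term in the proof of Theorem~\ref{thm_Main2}. What your route buys is transparency: it exhibits the precise first-order harmonic correction, in the same spirit as the Loh--Rhodes expansion \eqref{for_LR} used later, and shows that Lemma~\ref{lem_BinHar1} is exactly the cancellation mechanism at work, so that lemma does double duty in the two places where a harmonic-sum term must vanish mod $p$.
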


\begin{proof}
Once more, $m:= \frac{p-1}{2}$. Replacing $k_1$ and $k_2$ with $m-k_1$ and $m-k_2$, respectively, yields
\begin{equation*}
  \sum_{\substack{k_1, k_2, k_3, k_4=0\\ \sum k_i  = p-1}}^{m} \,
  \prod_{i=1}^{4}
  \binom{m}{k_i} \binom{m+k_i}{k_i}
  =
  \sum_{\substack{k_1,k_2, k_3, k_4 = 0\\k_1+k_2=k_3+k_4}}^{m}
  \prod_{i=1}^{2}
  \binom{m}{k_i}  \binom{2m-k_i}{m - k_i}
  \prod_{i=3}^{4}
  \binom{m}{k_i} \binom{m+k_i}{k_i}.
\end{equation*}
Let $\underline{k} \assign (k_1,k_2,k_3,k_4)$ and define
\begin{equation*}
  f(p,\underline{k}) \assign 
  \prod_{i=1}^{2}
  \binom{m}{k_i}  \binom{2m-k_i}{m - k_i}
  \prod_{i=3}^{4}
  \binom{m}{k_i} \binom{m+k_i}{k_i}
\end{equation*}
and
\begin{equation*}
  g(p,\underline{k}) \assign 
  \prod_{i=1}^{4}
  \binom{m}{k_i} \binom{m+k_i}{k_i}.
\end{equation*}
Then it suffices to prove
\begin{equation*}
  \sum_{\substack{k_1,k_2, k_3, k_4 = 0\\k_1+k_2=k_3+k_4}}^{m}
  f(p,\underline{k})
  \equiv
  \sum_{\substack{k_1,k_2, k_3, k_4 = 0\\k_1+k_2=k_3+k_4}}^{m}
  g(p,\underline{k})
  \pmod{p^2}.
\end{equation*}
It follows from \eqref{eq:bin2:p} that
\begin{align*}
  \binom{2m-k}{m - k} = \binom{2m-k}{m}
  &\equiv
  (-1)^{m} \, \binom{m+k}{k}
  \pmod{p}.
\end{align*}
Consequently, letting $m-\underline{k} \assign (m-k_1,m-k_2,m-k_3,m-k_4)$, we have that
\begin{align*}
  f(p,\underline{k})& + f(p,m-\underline{k}) - g(p,\underline{k}) - g(p,m-\underline{k})\\
  &=
  \left[ \prod_{i=1}^{4} \binom{m}{k_i} \right]
  \left[ \prod_{i=1}^{2}  \binom{2m-k_i}{m - k_i} - \prod_{i=1}^{2} \binom{m+k_i}{k_i} \right]
  \left[ \prod_{i=3}^{4}  \binom{m+k_i}{k_i} - \prod_{i=3}^{4}  \binom{2m-k_i}{m - k_i} \right]\\
  &\equiv 0 \pmod{p^2}.
\end{align*}
Therefore,
\begin{align*}
  2  \sum_{\substack{k_1,k_2, k_3, k_4 = 0\\k_1+k_2=k_3+k_4}}^{m} 
  \left[ f(p,\underline{k}) - g(p,\underline{k}) \right]
  &=
  \sum_{\substack{k_1,k_2, k_3, k_4 = 0\\k_1+k_2=k_3+k_4}}^{m}
  \left[ f(p,\underline{k}) + f(p,m-\underline{k}) - g(p,\underline{k}) - g(p,m-\underline{k}) \right]\\
  &\equiv 0 \pmod{p^2}.
\end{align*}
\end{proof}

\subsection{Multiplicative characters and finite field hypergeometric functions}
Let $\widehat{\mathbb{F}^{*}_{p}}$ denote the group of multiplicative characters of $\mathbb{F}^{*}_{p}$. 
We extend the domain of $\chi \in \widehat{\mathbb{F}^{*}_{p}}$ to $\mathbb{F}_{p}$, by defining $\chi(0) \assign 0$ (including the trivial character $\varepsilon_p$) and denote $\bar{\chi}$ as the inverse of $\chi$. When $p$ is odd we denote the character of order 2 of $\mathbb{F}_p^*$ by $\phi_p$. We will drop the subscript $p$ if it is clear from the context.
We recall the following orthogonality relation. 
For $\chi \in \widehat{\mathbb{F}_p^{*}}$, we have
\begin{equation}\label{for_TOrthEl}
  \sum_{x \in \mathbb{F}_p} \chi(x)=
  \begin{cases}
  p-1, & \text{if $\chi = \varepsilon$}  ,\\
  0, & \text{if $\chi \neq \varepsilon$}  .
  \end{cases}
\end{equation}
For $A$, $B  \in \widehat{\mathbb{F}^{*}_{p}}$, let
\begin{align*}
  \bin{A}{B} \assign \frac{B(-1)}{p} \sum_{x \in \mathbb{F}_{p}} A(x) \bar{B}(1-x).
\end{align*}
Then, for $A_0,A_1,\dotsc, A_n$, $B_1, \dotsc, B_n  \in \widehat{\mathbb{F}^{*}_{p}}$ and $x \in \mathbb{F}_{p}$, the finite field hypergeometric function of Greene \cite{G} is defined as 
\begin{equation*}
  {_{n+1}F_{n}} {\biggl( \begin{array}{cccc} A_0, & A_1, & \dotsc, & A_n \\
   \phantom{A_0,} & B_1, & \dotsc, & B_n \end{array}
  \Big| \; x \biggr)}_{p}
  \assign \frac{p}{p-1} \sum_{\chi  \in \widehat{\mathbb{F}^{*}_{p}}} \binom{A_0 \chi}{\chi} \prod_{i=1}^{n} \binom{A_i \chi}{B_i \chi} \chi(x) .
\end{equation*}
We consider the case where $A_i = \phi_p$ for all $i$ and $B_j= \varepsilon_p$ for all $j$ and write
\begin{equation*}
  {_{n+1}F_n}(x) =
  {_{n+1}F_n} \biggl( \begin{matrix} \phi_p,\, \phi_p,\, \dots,\, \phi_p \\
  \varepsilon_p,\, \dots,\, \varepsilon_p \end{matrix} \biggm| x \biggr)_{p}
\end{equation*}
for brevity.

Let $\mathbb{Z}_p$ denote the ring of $p$-adic integers and $\mathbb{Z}^{*}_p$ its group of units.
We define the Teichm\"{u}ller character to be the primitive character $\omega: \mathbb{F}_p \rightarrow\mathbb{Z}^{*}_p$ satisfying $\omega(x) \equiv x \pmod p$ for all $x \in \{0,1, \ldots, p-1\}$. In fact,
\begin{equation}\label{for_Teich}
\omega(x) \equiv x^{p^{n-1}} \pmod {p^n} 
\end{equation}
for all $n \geq 1$.

Theorem \ref{main2} is now implied by Proposition~\ref{prop:A86} and the following result.

\begin{theorem}\label{thm_Main2}
Let 
$$
\eta^{12}(2z) =: \sum_{n=1}^{\infty} \gamma(n) q^n
$$
be the unique newform in $S_6(\Gamma_0(4))$.
Then, for $p$ an odd prime,
\begin{equation} \label{n8p2}
  \gamma(p) \equiv 
  \sum_{\substack{k_1,k_2, k_3, k_4 = 0\\k_1+k_2=k_3+k_4}}^{\frac{p-1}{2}}
  \prod_{i=1}^{4}
  \binom{\frac{p-1}{2}}{k_i} \binom{\frac{p-1}{2}+k_i}{k_i}
  \pmod{p^2}.
\end{equation}
\end{theorem}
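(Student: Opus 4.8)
The plan is to reduce \eqref{n8p2} to a comparison between $\gamma(p)$ and a balanced binomial sum that admits a finite field hypergeometric interpretation, and then to push that comparison from modulo $p$ to modulo $p^2$ using the harmonic-sum lemmas of Section~\ref{sec:prelim}. Throughout write $m \assign \frac{p-1}{2}$. By Lemma~\ref{lem_BinHar3}, the right-hand side of \eqref{n8p2} is congruent modulo $p^2$ to
\begin{equation*}
  \Sigma(p) \assign \sum_{\substack{k_1,k_2,k_3,k_4=0\\ k_1+k_2+k_3+k_4=p-1}}^{m} \prod_{i=1}^4 \binom{m}{k_i}\binom{m+k_i}{k_i},
\end{equation*}
so it suffices to prove $\gamma(p)\equiv\Sigma(p)\pmod{p^2}$; the single prime $p=3$ is checked directly. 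The advantage of $\Sigma(p)$ over the form with $k_1+k_2=k_3+k_4$ is that the single linear constraint $\sum k_i=p-1$ is exactly what turns the sum into a Jacobi/Gauss-sum expression.

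First I would record the second-order $p$-adic expansion of each factor. Telescoping $\binom{m}{k}\binom{m+k}{k}=\frac{(k+1)_m^2}{(m!)^2}\prod_{j=1}^{k}\frac{m-j+1}{m+j}$ and using $m+1=p-m$ gives
\begin{equation*}
  \binom{m}{k}\binom{m+k}{k}\equiv(-1)^k\,\frac{(k+1)_m^2}{(m!)^2}\bigl(1-p\,(H_{m+k}-H_m)\bigr)\pmod{p^2},
\end{equation*}
which refines \eqref{eq:Bin_to_Poch}. Substituting this into $\Sigma(p)$, using $(-1)^{\sum k_i}=1$ and expanding the fourfold product, the $O(p)$ part decomposes, by the symmetry of the constraint in the $k_i$, into single-index harmonic sums. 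Writing $H_{m+k}-H_m=(H_{m+k}-H_k)-(H_k-H_m)$, the piece involving $H_{m+k}-H_k$ vanishes modulo $p$ by Lemma~\ref{lem_BinHar1}, while the residual $H_k$ pieces together with the index-independent $H_m$ terms and the normalizing factor $(m!)^{-8}$—evaluated modulo $p^2$ via \eqref{p1}, \eqref{p3} and the identity $2^{2p-2}\equiv1-pH_m$ of Lemma~\ref{cor_2powerp2}—assemble into the first-order term that must match $\gamma(p)$. The mod-$p$ value of the surviving leading sum is controlled by Lemma~\ref{lem_BinHar2}.

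It then remains to identify this leading character sum with $\gamma(p)$ modulo $p^2$. Here I would pass to the Teichm\"uller character via \eqref{for_Teich}, rewrite $\Sigma(p)$ as a finite field hypergeometric value in the sense of Section~\ref{sec:prelim} built from $\phi_p$ and $\varepsilon_p$, and then invoke the hypergeometric description of the trace of Frobenius attached to the weight $6$ newform $\eta^{12}(2z)\in S_6(\Gamma_0(4))$. Since $\eta^{12}(2z)$ is not of CM type (for instance $\gamma(3)=-12\neq0$), this step cannot use the Hecke-character construction of Theorem~\ref{thm_CM}; instead it rests on the finite field hypergeometric / point-count dictionary for the underlying Calabi--Yau-type variety, exactly as in the proofs of \eqref{superapery1} and \eqref{superapery2} in \cite{ahlgren} and \cite{ao}.

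The main obstacle is carrying this identification to modulo $p^2$ rather than merely modulo $p$. The congruence modulo $p$ is comparatively soft, but the second-order refinement requires tracking the $p$-adic Gamma factors (via Gross--Koblitz) on the hypergeometric side and verifying that the first-order corrections produced by the expansion above—after the cancellation guaranteed by Lemma~\ref{lem_BinHar1} and the normalization bookkeeping of Lemma~\ref{cor_2powerp2}—reproduce precisely the $O(p)$ part of $\gamma(p)$. A secondary subtlety is to ensure that the passage from the balanced sum of \eqref{n8p2} to $\Sigma(p)$ via Lemma~\ref{lem_BinHar3} is compatible with this mod-$p^2$ bookkeeping, so that no precision is lost in the reduction.
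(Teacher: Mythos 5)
Your combinatorial periphery matches the paper's proof closely, just run in the opposite direction: the paper also passes between the balanced constraint $k_1+k_2=k_3+k_4$ and the constraint $\sum k_i = p-1$ via Lemma~\ref{lem_BinHar3} (as its final step), your second-order expansion
\begin{equation*}
  \binom{m}{k}\binom{m+k}{k}\equiv(-1)^k\,\frac{(k+1)_m^2}{(m!)^2}\bigl(1-p\,(H_{m+k}-H_m)\bigr)\pmod{p^2}
\end{equation*}
is correct and is in substance the truncation that the paper imports from Loh--Rhodes \cite[Theorem~2.4]{LR}, and the disposal of the harmonic corrections does indeed run through Lemmas~\ref{lem_BinHar1}, \ref{lem_BinHar2} and \ref{cor_2powerp2} (though you underestimate this part: the paper needs an additional differentiation-of-polynomials argument, the $P(k_i)$ computation with \eqref{exp_sums}, to show the correction sum is $\equiv 3 \pmod p$, not just those three lemmas). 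Your observation that $\eta^{12}(2z)$ is non-CM, so Theorem~\ref{thm_CM} is unavailable, is also correct and consistent with the paper's change of method.

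The genuine gap is at the center: the step where you ``invoke the hypergeometric description of the trace of Frobenius attached to $\eta^{12}(2z)$'' is not a citable step at mod $p^2$ precision --- proving that the leading character-sum/binomial expression agrees with $\gamma(p)$ modulo $p^2$ \emph{is} the theorem, so as written your plan is circular there, and you concede as much by labeling the mod-$p^2$ identification ``the main obstacle.'' The paper dissolves this obstacle by arranging all modularity input as \emph{exact integer identities} before any truncation: the Eichler--Selberg-type trace formula of Frechette--Ono--Papanikolas \cite[Proposition~2.1]{FOP}, with $\dim S_6(\Gamma_0(4))=1$, combined with Ahlgren's second-moment evaluation \cite[Lemma~2.1]{A2}, gives $\gamma(p) = 2p^3-4p^2-9p-3-\sum_{\lambda=2}^{p-1}a(p,\lambda)^4$ exactly; Koike's identity $a(p,\lambda)=-\phi(-1)\,p\cdot{_2F_1}(\lambda)$ \cite{Kk} and Greene's transformation \cite{G}, \cite{O} are likewise exact. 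Only then is the Loh--Rhodes mod-$p^2$ expansion applied pointwise in $\lambda$ and summed with the orthogonality relation \eqref{for_TOrthEl}, which forces $\sum k_i \equiv 0 \pmod{p-1}$ and produces boundary contributions ($\sum k_i = 0$ and $\sum k_i = 2(p-1)$, the subtracted $\lambda=1$ term, constants such as $(p+1)^4\equiv 4p+1$) that your sketch never confronts but which are essential to the bookkeeping. In this arrangement no Gross--Koblitz or $p$-adic Gamma analysis is needed at all; in yours, that analysis is precisely the unexecuted step, and without identifying the exact trace-formula input the proposal does not constitute a proof.
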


\begin{proof}
We confirm that \eqref{n8p2} holds for $p=3$ and assume $p\geq 5$ henceforth.
For the Legendre family of elliptic curves $E_{\lambda}$, given by
\begin{equation*}\label{def_ELambda}
  E_{\lambda}: y^2=x(x-1)(x-\lambda), \qquad \lambda \not\in  \lbrace 0, 1 \rbrace,
\end{equation*}
we define
\begin{equation*}\label{def_ap}
  a(p, \lambda) \assign p+1-\# E_{\lambda}(\mathbb{F}_p).
\end{equation*}
Then, from \cite[Proposition~2.1]{FOP} and using the fact that $\dim(S_6(\Gamma_0(4)))=1$, we get that for $p$ an odd prime,
\begin{align*}
  \gamma(p) 
  &= -3 - \sum_{\lambda=2}^{p-1} \left( a(p,\lambda)^4 - 3 p \, a(p,\lambda)^2 +p^2 \right)\\
  &= -p^3 +2p^2 -3 - \sum_{\lambda=2}^{p-1} \left( a(p,\lambda)^4 - 3 p \, a(p,\lambda)^2\right).
\end{align*}
From \cite[Lemma~2.1]{A2} we have
\begin{equation*}
\sum_{\lambda=2}^{p-1} a(p,\lambda)^2 = p^2 - 2p - 3,
\end{equation*}
so that, combining these two equations,
\begin{align}\label{for_dp1}
  \gamma(p) 
  &= 2p^3 -4p^2 -9p -3 - \sum_{\lambda=2}^{p-1}  a(p,\lambda)^4.
\end{align}
In \cite[Section~4]{Kk}, Koike showed that for $\lambda \neq 0,1$,
\begin{equation*}
  a(p,\lambda) = -\phi(-1) \cdot p \cdot {_{2}F_1}(\lambda).
\end{equation*}
We now recall a couple of facts about finite field hypergeometric functions from \cite[Proposition~3]{O} and \cite[Theorem~4.2]{G} respectively:
\begin{equation*}
  p \cdot {_{2}F_1}(1) 
  =-\phi(-1),
\end{equation*}
and, for $\lambda \neq 0$,
\begin{equation*}
  {_{2}F_1}(\lambda) 
  =  \phi(\lambda) \cdot  {_{2}F_1}\Bigl(\frac{1}{\lambda}\Bigr) .
\end{equation*}
Therefore,
\begin{align}\label{for_ap1}
  \sum_{\lambda=2}^{p-1}  a(p,\lambda)^4
  \notag &= 
  \sum_{\lambda=2}^{p-1}
  \left[
  -\phi(-1) \cdot p \cdot {_{2}F_1}(\lambda) 
  \right]^4\\
  \notag &=
  \sum_{\lambda=1}^{p-1}
  \left[
  -\phi(-1) \cdot p \cdot {_{2}F_1}(\lambda) 
  \right]^4
  -
  \left[
  -\phi(-1) \cdot p \cdot {_{2}F_1}(1) 
  \right]^4\\
  \notag &=
  \sum_{\lambda=1}^{p-1}
  \left[
  -\phi(-1) \cdot p \cdot {_{2}F_1}(\lambda) 
  \right]^4
  -1\\
  &=
  \sum_{\lambda=1}^{p-1}
  \left[
  -\phi(-\lambda) \cdot p \cdot {_{2}F_1}\Bigl(\frac{1}{\lambda}\Bigr) 
  \right]^4
  -1.
\end{align}
By Theorem 2.4 in \cite{LR} (see also Theorem 1.1 in \cite{os}) and \eqref{for_Teich}, we get that, for $\lambda \neq 0$, 
\begin{align}\label{for_LR}
\notag -&\phi(-\lambda) \cdot p \cdot {_{2}F_1}\Bigl(\frac{1}{\lambda}\Bigr) \\ 
&=
\notag -\phi(-\tfrac{1}{\lambda}) \cdot p \cdot
{_{2}F_1}\Bigl(\frac{1}{\lambda}\Bigr) \\ 
\notag &\equiv
(p+1) \sum_{j=0}^{(p-1)/2}
\binom{\frac{p-1}{2}}{j} \binom{\frac{p-1}{2}+j}{j}
(-1)^j \lambda^{jp}
\left(
1 + 2jp 
\left(
H_{\frac{p-1}{2}+j}-H_j
\right)
\right)\\
&\equiv
(p+1) \sum_{j=0}^{(p-1)/2}
\binom{\frac{p-1}{2}}{j} \binom{\frac{p-1}{2}+j}{j}
(-1)^j
\left(
1 + 2jp 
\left(
H_{\frac{p-1}{2}+j}-H_j
\right)
\right)
\omega(\lambda^{j})
\pmod{p^2}.
\end{align}
For brevity, we again let $m=(p-1)/2$ and write
\begin{equation*}
f(p, k_1,k_2,k_3,k_4) \assign 
\prod_{i=1}^{4}
\binom{m}{k_i} \binom{m+k_i}{k_i}
(-1)^{k_i}
\left(
1 + 2p k_i 
\left(
H_{m+k_i}-H_{k_i}
\right)
\right).
\end{equation*}
Combining \eqref{for_ap1} and \eqref{for_LR} yields
\begin{align}\label{for_ap2}
\notag 1 + \sum_{\lambda=2}^{p-1}  a(p,\lambda)^4
\notag & \equiv
\sum_{\lambda=1}^{p-1}
\left[
(p+1) \sum_{j=0}^{m}
\binom{m}{j} \binom{m+j}{j}
(-1)^j
\left(
1 + 2jp 
\left(
H_{m+j}-H_j
\right)
\right)
\omega^{j}(\lambda)
\right]^4\\
\notag & \equiv 
(4p+1) 
\sum_{k_1,k_2,k_3,k_4=0}^{m}
f(p, k_1,k_2,k_3,k_4)
\sum_{\lambda=1}^{p-1} \omega^{k_1+k_2+k_3+k_4}(\lambda)\\
& \equiv 
-(3p+1) 
\sum_{\substack{k_1, k_2, k_3, k_4=0\\ \sum k_i  \equiv \, 0 \, \, (\text{mod} \,\, p-1)}}^{m}
f(p, k_1,k_2,k_3,k_4)
\pmod{p^2},
\end{align}
where we have used \eqref{for_TOrthEl} and the facts that $(p+1)^4 \equiv 4p+1 \pmod{p^2}$ and $(4p+1)(p-1) \equiv -(3p+1) \pmod{p^2}$.
Accounting for \eqref{for_ap2} in \eqref{for_dp1}, we now have
\begin{equation*}\label{for_dp2}
  \gamma(p) 
  \equiv -9p-2 +(3p+1) 
  \sum_{\substack{k_1, k_2, k_3, k_4=0\\ \sum k_i  \equiv  \, 0 \, \, (\text{mod} \,\, p-1)}}^{m}
  f(p, k_1,k_2,k_3,k_4)
  \pmod{p^2}.
\end{equation*}

In order that $\sum k_i \equiv 0 \pmod{p-1}$, the sum $\sum k_i$ must be either $0$, $p-1$ or $2(p-1)$.
In the case $\sum k_i=0$, we necessarily have $k_1=k_2=k_3=k_4=0$, which contributes $f(p,0,0,0,0) = 1$.
In the third case, that is $\sum k_i=2(p-1)$, we necessarily have $k_1=k_2=k_3=k_4=\frac{p-1}{2}=m$.
Applying \eqref{p3}, \eqref{p2} and Lemma~\ref{cor_2powerp2}, we evaluate
\begin{align*}
f\left(p,m,m,m,m\right)
&=
\left[
\binom{m}{m} \binom{p-1}{m}
(-1)^{m}
\left(
1 + p(p-1) 
\left(
H_{p-1}-H_{m}
\right)
\right)
\right]^4\\
&\equiv
\left[
\binom{p-1}{m}
\left(
1 - p
\left(
H_{p-1}-H_{m}
\right)
\right)
\right]^4\\
&\equiv
\left[
(-1)^{m} \, 2^{2p-2}
\left(
1 + p \,
H_{m}
\right)
\right]^4\\
&\equiv
\left[
(-1)^{m} \, 
\left(
1 - p \,
H_{m}
\right)
\left(
1 + p \,
H_{m}
\right)
\right]^4
\equiv
\left[
(-1)^{m} \, 
\right]^4
\equiv
1
\pmod{p^2}.
\end{align*}
The only other possibility is $\sum k_i=p-1$.
Therefore,
\begin{equation*}\label{for_dp3}
\gamma(p) 
\equiv -3p +(3p+1) 
\sum_{\substack{k_1, k_2, k_3, k_4=0\\ \sum k_i  \, = \, p-1}}^{m}
f(p, k_1,k_2,k_3,k_4)
\pmod{p^2}.
\end{equation*}
Noting that $H_{m+j}-H_j \in \mathbb{Z}_p$ for $0 \leq j \leq m$, we have
\begin{align*}
f(p, k_1,k_2,k_3,k_4)
&=
\prod_{i=1}^{4}
\binom{m}{k_i} \binom{m+k_i}{k_i}
(-1)^{k_i}
\left(
1 + 2p k_i 
\left(
H_{m+k_i}-H_{k_i}
\right)
\right)\\
&\equiv
\left[
\prod_{i=1}^{4}
\binom{m}{k_i} \binom{m+k_i}{k_i}
\right]
\left(
1 + 2p \sum_{i=1}^{4} k_i 
\left(
H_{m+k_i}-H_{k_i}
\right)
\right)
\pmod{p^2},
\end{align*}
and so 
\begin{multline}\label{for_dp4}
\gamma(p) 
\equiv 
-3p + p
\sum_{\substack{k_1, k_2, k_3, k_4=0\\ \sum k_i  \, = \, p-1}}^{m} 
g(p, k_1,k_2,k_3,k_4)
+\sum_{\substack{k_1, k_2, k_3, k_4=0\\ \sum k_i  \, = \, p-1}}^{m} \,
\prod_{i=1}^{4}
\binom{m}{k_i} \binom{m+k_i}{k_i}
\pmod{p^2},
\end{multline}
where, for brevity,
\begin{equation*}
g(p, k_1,k_2,k_3,k_4)  \assign 
\left[
\prod_{i=1}^{4}
\binom{m}{k_i} \binom{m+k_i}{k_i}
\right]
\left(
3 + 2 \sum_{i=1}^{4} k_i 
\left(
H_{m+k_i}-H_{k_i}
\right)
\right).
\end{equation*}
We will now show that
\begin{equation}\label{for_g_1}
\sum_{\substack{k_1, k_2, k_3, k_4=0\\ \sum k_i  \, = \, p-1}}^{m} 
g(p, k_1,k_2,k_3,k_4) 
\equiv 3
\pmod{p}.
\end{equation}
Using \eqref{eq:Bin_to_Poch} and \eqref{p1}, we note that, when $\sum k_i=p-1$,
\begin{align}\label{for_g_2}
\notag
g(p, k_1,k_2,k_3,k_4) 
&\equiv
\left[
\prod_{i=1}^{4}
(-1)^{k_i}
\left(\frac{{(k_i+1)}_{m}}{\fac{\left(m\right)}} \right)^2
\right]
\left(
3 + 2 \sum_{i=1}^{4} k_i 
\left(
H_{m+k_i}-H_{k_i}
\right)
\right)\\
&\equiv
\left[
\prod_{i=1}^{4}
{(k_i+1)}_{m}^2
\right]
\left(
3 + 2 \sum_{i=1}^{4} k_i 
\left(
H_{m+k_i}-H_{k_i}
\right)
\right)
\pmod{p}.
\end{align}
Let us define
\begin{multline}\label{def_g1}
g_1(p, k_1,k_2,k_3,k_4) \\ \assign 
\left[
\prod_{i=1}^{4}
{(k_i+1)}_{m}^2
\right]
\left(
3 + 2 \sum_{i=1}^{3} k_i 
\left(
H_{m+k_i}-H_{k_i}
\right)
-2(k_1+k_2+k_3)
\left(
H_{m+k_4}-H_{k_4}
\right)
\right)
\end{multline}
as well as
\begin{equation*}
  g_2(p, k_1,k_2,k_3,k_4) \\ \assign 
  \left[
  \prod_{i=1}^{4}
  {(k_i+1)}_{m}^2
  \right]
  2(p-1)
  \left(
  H_{m+k_4}-H_{k_4}
  \right).
\end{equation*}
Then, using \eqref{for_g_2} and Lemma \ref{lem_BinHar1}, we get that 
\begin{align*}
  \sum_{\substack{k_1, k_2, k_3, k_4=0\\ \sum k_i  \, = \, p-1}}^{m} 
  g(p, k_1,k_2,k_3,k_4) 
  &\equiv
  \sum_{\substack{k_1, k_2, k_3, k_4=0\\ \sum k_i  \, = \, p-1}}^{m} 
  g_1(p, k_1,k_2,k_3,k_4) 
  +\sum_{\substack{k_1, k_2, k_3, k_4=0\\ \sum k_i \, = \, p-1}}^{m} 
  g_2(p, k_1,k_2,k_3,k_4) \\
  &\equiv
  \sum_{\substack{k_1, k_2, k_3, k_4=0\\ \sum k_i \, = \, p-1}}^{m} 
  g_1(p, k_1,k_2,k_3,k_4) 
  \pmod{p}.
\end{align*}
If $m < k < p$, then $H_{m+k} - H_k \in \frac{1}{p} \mathbb{Z}_p$ and ${(k+1)}_{m} \in p \mathbb{Z}_p$.
Therefore, to establish \eqref{for_g_1} it suffices to prove
\begin{equation}\label{for_g_3}
  \sum_{\substack{k_1, k_2, k_3, k_4=0\\ \sum k_i  \, = \, p-1}}^{p-1} 
  g_1(p, k_1,k_2,k_3,k_4) 
  \equiv 3
  \pmod{p}.
\end{equation}
Let $k_4 \assign p-1 - k_1 -k_2 -k_3$ and consider, for $1 \leq i \leq 3$,
\begin{align*}
P(k_i)
& \assign \frac{d}{dk_i} \left[k_i \, (k_i+1)_{m}^2 \; (k_4+1)_{m}^2 \right] = b_{i,0} + \sum^{4m}_{t=1} b_{i,t} k_i^t\\
&= k_i \, (k_i+1)_{m}^2 \; \frac{d}{dk_i} \left[(k_4+1)_{m}^2 \right] + 
(k_4+1)_{m}^2 \; \frac{d}{dk_i} \left[  k_i \, (k_i+1)_{m}^2 \right].
\end{align*}
Now,
\begin{align*}
  \frac{d}{dk_i} \left[ k_i \, (k_i+1)_{m}^2 \right] 
  \notag&= (k_i+1)_{m}^2 + k_i \, \frac{d}{dk_i} \left[ (k_i+1)_{m}^2 \right] \\[9pt]
  \notag&= (k_i+1)_{m}^2 + k_i  \left[ 2 \, (k_i+1)_m \, \frac{d}{dk_i}  (k_i+1)_m \right] \\[9pt]
  \notag&= (k_i+1)_{m}^2 + k_i  \left[ 2 \, (k_i+1)_m \sum_{s=1}^{m} \prod_{\substack{r=1 \\ r \neq s}}^{m} (k_i+r)\right] \\[9pt]
  &=  (k_i+1)_{m}^2  \left[ 1 + 2 k_i   \sum_{s=1}^{m}\frac{1}{(k_i+s)}\right] \\[9pt]
  \notag&= (k_i+1)_{m}^2  \left[ 1 + 2 k_i \left( H_{m+k_i} - H_{k_i} \right)\right],
\end{align*}
and, similarly,
\begin{align*}
  \frac{d}{dk_i} \left[(k_4+1)_{m}^2 \right] 
  & = 2 \, (k_4+1)_{m} \, \frac{d}{dk_i} \left[(k_4+1)_{m} \right] \\
  & = 2 \, (k_4+1)_{m} \, \frac{d}{dk_i} \left[(p-1-k_1-k_2- k_3 +1)_{m} \right] \\
  & = - 2 \, (k_4+1)_{m} \,\sum_{s=1}^{m} \prod_{\substack{r=1 \\ r \neq s}}^{m} (p-1-k_1-k_2-k_3+r) \\
  & = - 2 (k_4+1)_{m}^2 \left( H_{m+k_4} - H_{k_4} \right).
\end{align*}
So,
\begin{equation}\label{for_pk}
  P(k_i) 
  = (k_i+1)_{m}^2 \; (k_4+1)_{m}^2 \left[ 1 + 2 k_i \left( H_{m+k_i} - H_{k_i} \right)
  -2 k_i \left( H_{m+k_4} - H_{k_4} \right) \right].
\end{equation}
Noting \eqref{exp_sums}, we see that
\begin{equation*}
  \sum_{k_1=0}^{p-1} P(k_i)
  =  p \, b_{i,0} + \sum_{k_1=1}^{p-1} \sum^{4m}_{t=1} b_{i,t} k_i^t
  \equiv  \sum^{4m}_{t=1} b_{i,t} \sum_{k_1=1}^{p-1}  {k_i}^t
  \equiv -b_{i,p-1} -b_{i,2(p-1)} 
  \pmod{p}.
\end{equation*}
By definition of $P(k_i)$, we have that
\begin{equation*}
  (k_i+1)_{m}^2 \; (k_4+1)_{m}^2 = b_{i,0} + \sum_{t=1}^{4m} \frac{b_{i,t}}{t+1} k_i^t \;,
\end{equation*}
which is monic with integer coefficients. Thus, $p\mid b_{i,p-1}$ and $b_{i,2(p-1)}=2p-1$. Therefore,
\begin{equation}\label{for_sumpk}
  \sum_{k_i=0}^{p-1} P(k_i)  \equiv -b_{i,p-1} -b_{i,2(p-1)}  \equiv 1 \pmod{p}.
\end{equation}
Considering \eqref{def_g1}, accounting for \eqref{for_pk} and \eqref{for_sumpk}, and applying Lemma~\ref{lem_BinHar2}, we get that
\begin{align*}\label{for_g_5}
  \sum_{\substack{k_1, k_2, k_3, k_4=0\\ \sum k_i  \, = \, p-1}}^{p-1} 
  g_1(p, k_1,k_2,k_3,k_4) 
  &=
  \sum_{k_1, k_2, k_3=0}^{p-1} 
  \sum_{i=1}^{3} 
  \left[
  \prod_{\substack{j=1\\j \neq i}}^{3}
  {(k_j+1)}_{m}^2
  \right]
  P(k_i)\\
  &=
  \sum_{i=1}^{3} 
  \sum_{\substack{k_j=0\\j \neq i}}^{p-1}
  \left[
  \prod_{\substack{j=1\\j \neq i}}^{3}
  {(k_j+1)}_{m}^2
  \right]
  \sum_{k_i=0}^{p-1} 
  P(k_i)\\
  & \equiv
  \sum_{i=1}^{3} 
  \sum_{\substack{k_j=0\\j \neq i}}^{p-1}
  \left[
  \prod_{\substack{j=1\\j \neq i}}^{3}
  {(k_j+1)}_{m}^2
  \right]
  \equiv 
  \sum_{i=1}^{3} 1
  = 3
  \pmod{p}.
\end{align*}
This establishes \eqref{for_g_3}, which in turn establishes \eqref{for_g_1}. Now, accounting for \eqref{for_g_1} in \eqref{for_dp4}, we see that 
\begin{equation*}
  \gamma(p) 
  \equiv 
  \sum_{\substack{k_1, k_2, k_3, k_4=0\\ \sum k_i  \, = \, p-1}}^{\frac{p-1}{2}} \,
  \prod_{i=1}^{4}
  \binom{\frac{p-1}{2}}{k_i} \binom{\frac{p-1}{2}+k_i}{k_i}
  \pmod{p^2}.
\end{equation*}
Applying Lemma \ref{lem_BinHar3} completes the proof.
\end{proof}

\section*{Acknowledgements}
The first and third authors are supported by a grant from the Simons Foundation (\#353329, Dermot McCarthy; \#514645, Armin Straub). The second author would like to thank both the Institut des Hautes \'Etudes Scientifiques and the Max-Planck-Institut f\"ur Mathematik for their support and Francis Brown for his encouragement during the initial stages of this project. He also thanks the organizers of the workshop ``Hypergeometric motives and Calabi-Yau differential equations", January 8--27, 2017 at the MATRIX institute, The University of Melbourne at Creswick, for the opportunity to discuss these results. The authors are grateful to Cl\'ement Dupont for several insightful and helpful comments on cellular integrals.

\end{document}